\numberwithin{equation}{section}
\newcommand{\uproman}[1]{\uppercase\expandafter{\romannumeral#1}}
\theoremstyle{plain}
\newtheorem{Th}{Theorem}[section]
\newtheorem{Lemma}[Th]{Lemma}
\theoremstyle{definition}
\newtheorem{Def}[Th]{Definition}
\newtheorem{Rem}[Th]{Remark}
\newtheorem{Ex}[Th]{Example}
\DeclareMathOperator{\Span}{span}
\DeclareMathOperator{\Identity}{I}
\DeclareMathOperator{\DeltaAmpl}{dn}
\newcommand{\scp}[3]{\langle #1,#2\rangle_{#3}}
\newcommand{\IntScp}[4]{\scp{#1}{#2}{\textrm{#3}^{#4}(\Vnull,\Veins)}}
\newcommand{\IntScpVr}[4]{\scp{#1}{#2}{#3_r^{#4}}}
\newcommand{\IntScpVrDual}[4]{\scp{#1}{#2}{#3_{*,r}^{#4}}}
\newcommand{\minim}[1]{{\footnotesize{\textsc{v}_{#1}}}}
\newcommand{\minimdual}[1]{{\LARGE\textsc{v}_{#1}}}
\newcommand{\Vh}{\textsc{V}_h}
\newcommand{\Vr}{\mathcal{V}_r}
\newcommand{\Vnull}{\mathscr{V}_0}
\newcommand{\Hilbert}[1]{\mathscr{V}_{#1}}
\newcommand{\Veins}{\mathscr{V}_1}
\newcommand{\IntNorm}[3]{\|#1\|_{\textrm{#2}^{#3}(\Vnull,\Veins)}}
\newcommand{\IntNormGeneral}[7]{\|#1\|_{\textrm{#2}^{#3}((#4,\Norm{\cdot}{#5}),(#6,\Norm{\cdot}{#7}))}}
\newcommand{\IntNormDual}[3]{\|#1\|_{\textrm{#2}^{#3}(\Hilbert{-1},\Vnull)}}
\newcommand{\IntNormRB}[3]{\|#1\|_{#2_r^{#3}}}
\newcommand{\IntNormRBDual}[3]{\|#1\|_{{#2}_{*,r}^{#3}}}
\newcommand{\TheIntSpace}{[\Vnull,\Veins]_{s}}
\newcommand{\R}{\mathbb{R}}
\newcommand{\N}{\mathbb{N}}
\newcommand{\Op}{\mathcal{L}}
\newcommand{\IntCouple}{(\Vnull,\Veins)}
\newcommand{\IntOp}[2]{\Op_{\textsc{#1}^{#2}(\Vnull,\Veins)}}
\newcommand{\IntOpGeneral}[6]{\Op_{\textsc{#1}^{#2}((#3,#4),(#5,#6))}}
\newcommand{\IntOpDual}[2]{\Op_{\textsc{#1}^{#2}(\Hilbert{-1},\Vnull)}}
\newcommand{\IntOpRBDual}[2]{\Op_{#1_{*,r}}^{#2}}
\newcommand{\IntOpRB}[2]{\Op_{#1_r}^{#2}}
\newcommand{\IntMatOpVr}[2]{L_{#1_{r}}^{#2}}
\newcommand{\IntMatOpVrDual}[2]{L_{#1_{*,r}}^{#2}}
\newcommand{\RBSolutionDual}[1]{u_{r}^*(#1)}
\newcommand{\Krm}{\mathrm{K}}
\newcommand{\Norm}[2]{\|#1\|_{#2}}
\newcommand{\VecVh}[1]{\underline{#1}}
\newcommand{\VecVr}[1]{\underline{\underline{#1}}}
\newcommand{\Family}[3]{(#1)_{#2}^{#3}}
\newcommand{\K}{\mathrm{K}}
\newcommand{\Sum}[2]{\sum\limits_{#1}^{#2}}
\newcommand{\Ainvr}[1]{A_{*,r}^{#1}}
\newcommand{\Riesz}{\mathcal{R}_{1}}
\newcommand{\RieszV}{\mathcal{R}_{\Hilbert{}}}
\newcommand{\RieszVh}{\mathcal{R}}
\begin{document}
	\title{A Reduced Basis Method For Fractional Diffusion Operators \uproman{2}}

	\author{Tobias Danczul}
	\author{Joachim Sch\"oberl}
	\address{TU Wien \\ Institute for Analysis and Scientific Computing \\
		Wiedner Hauptstrasse 8-10, 1040 Wien, Austria} 
	\email{tobias.danczul@tuwien.ac.at}
	\email{joachim.schoeberl@tuwien.ac.at}

	\keywords{Fractional Laplace, Reduced Basis Method, Interpolation spaces, Zolotar\"ev points, Finite Element Method} 
	
	\subjclass[2010]{Primary  46B70, 65N12, 65N15, 65N30, 35J15; Secondary 65N25, 35P10}

	\begin{abstract} 
	We present a novel numerical scheme to approximate the solution map $s\mapsto u(s) := \Op^{-s}f$ to partial differential equations involving fractional elliptic operators. Reinterpreting $\Op^{-s}$ as interpolation operator allows us to derive an integral representation of $u(s)$ which includes solutions to parametrized reaction-diffusion problems. We propose a reduced basis strategy on top of a finite element method to approximate its integrand. Unlike prior works, we deduce the choice of snapshots for the reduced basis procedure analytically. Avoiding further discretization, the integral is interpreted in a spectral setting to evaluate the surrogate directly. Its computation boils down to a matrix approximation $L$ of the operator whose inverse is projected to a low-dimensional space, where explicit diagonalization is feasible. The universal character of the underlying $s$-independent reduced space allows the approximation of $(u(s))_{s\in(0,1)}$ in its entirety. We prove exponential convergence rates and confirm the analysis with a variety of numerical examples.
	
	Further improvements are proposed in the second part of this investigation to avoid inversion of $L$. Instead, we directly project the matrix to the reduced space, where its negative fractional power is evaluated. A numerical comparison with the predecessor highlights its competitive performance.
	\end{abstract}
	
	\maketitle
	
	\section{Introduction}
	The past twenty years have seen rapid advances in the field of fractional calculus. A significant amount of recent publications have been recognizing its potential to enhance their mathematical model in a sophisticated way. The range of application is broadly scattered, see e.g., \cite{ApplPopulDyn}, \cite{ApplApplebaum}, \cite{ApplImageProc}, \cite{ApplMaterialSc}, \cite{ApplPorousMediaFlow}, \cite{SpecApprFracPDES}, \cite{ApplicationBloodFlow}, \cite{ApplicationIschemia}, and \cite{sMapAppl2}. The arising interest in fractional powers of differential operators evokes the demand for robust and reliable numerical schemes. The challenge of their design is in particular a matter of efficiency. A vast body of literature has been published in order to tackle these difficulties, first and foremost by means of the fractional Laplace model problem
	\begin{align}
	\begin{aligned}\label{SecInt:ModelProblem}
			(-\Delta)^su &= f, \quad&&\text{in }\Omega, \\
			u &= 0, &&\text{on }\partial\Omega,
	\end{aligned}
	\end{align}
	for a bounded Lipschitz domain $\Omega\subset\R^d$, $d =1,2,3$, $f\in L_2(\Omega)$, and $s\in(0,1)$. We refer to \cite{AdaptivityFaustmann}, \cite{MelenkHMatrix}, \cite{AinsworthAdaptivFEMFracLapl}, \cite{MultilevMethFracDiff}, \cite{AinsworthEfficFEMFracLapl}, and \cite{TimeStepFracLaplace} to name a few of them. The literature provides a large variety of non-equivalent definitions of $(-\Delta)^s$. A comprehensive survey over its versatile definitions as well as the comparison of both existing and newly proposed numerical schemes is performed in \cite{FracLaplaceReview}, see also \cite{RatFracLaplace}, \cite{ReviewFracLaplace}, and \cite{EquivDef}. In this paper, we are concerned with fractional elliptic operators defined via spectral expansion.
	
	A conceptually straightforward approach is the accurate but expensive discrete eigenfunction method, as it is referred to in e.g., \cite{FracLaplaceReview}. It relies on a matrix approximation of the desired operator, whose $s^{th}$ power is evaluated, see also \cite{MTT1} and \cite{MTT2}. Due to its considerable computational effort, this approach is justified only if the problem-size is small.
	
	The algorithm developed in \cite{Pasciak} and later improved in \cite{SincQuadImproved} relies on the Dunford-Taylor integral representation of $\Op^{-s}$ and comes in two stages. First, the integral is approximated by a quadrature scheme especially tailored for problems of this type. Evaluation of the integrand in its quadrature nodes amounts to the computation of $\minim{}(t) := (\Identity-t^2\Op)^{-1}f$, $t\in\R^+$, which is why $\minim{}(t)$ is replaced by a finite element approximation in the second step. Exponential decay of the error in the number of quadrature nodes is shown. Further improvements are discussed in \cite{RBMBonito} and \cite{RBMKatoa}, where a third layer of approximation is added in the form of a reduced basis method. The choice of the underlying reduced space relies on a weak greedy algorithm. In \cite{RBMBonito}, the space is chosen independently of $s\in[s_\text{min},s_\text{max}]$ with $0<s_{\text{min}} \leq s_{\text{max}}<1$. Utilizing similar techniques as in \cite{RBMForFracDiff}, the authors of \cite{RBMBonito} prove exponential convergence rates for their algorithm. Comparable results are observed experimentally in \cite{RBMKatoa} for a different quadrature with computable upper bounds for the error. The fact that solutions to fractional differential equations are compressible, however, has been observed even earlier, see \cite{RBMForNonlocalOp} and \cite{CertifiedRB}.
	
	In \cite{PasciakBURA}, the so-called best uniform rational approximation of $t^{-s}$ is utilized on a spectral interval to approximate $\Op^{-s}$ efficiently, see also \cite{BURA}. Its evaluation also involves the computation of $\minim{}(t)$ at a selection of support points. As shown in \cite{RatFracLaplace}, a whole class of numerical methods admits the interpretation as rational approximation of a univariate function over a spectral interval.
	
	Of profound importance is the work of Caffarelli and Silvestre in \cite{HarmonicExt} and its variants \cite{ConConvEllipProb}, \cite{SqrRootLaplaceDirichlet}, \cite{RadialExtremalSol}, \cite{HarmExtGeneralized}. The idea is to rewrite the fractional differential equation as local degenerate integer order PDE on the semi-infinite cylinder $\mathcal{C} := \Omega\times\R^+$. A natural approach consists of a $d+1$ dimensional finite element method which takes advantage of the solution's rapid decay in the artificial direction, justifying truncation to a bounded domain of moderate size, see \cite{NochettoOtarolaSalgado}, \cite{MelenkTensorFEM}, \cite{MelenkRieder}, and \cite{hpTensorFEM}. The authors of \cite{AinsworthSpectralExt} avoided truncation of the cylinder by means of a spectral method in the extended direction. One of the pioneer model order reduction methods for the fractional Laplacian has been tailored in \cite{CertifiedRB}. The degenerate diffusion coefficient of the aforementioned boundary value problem is approximated in a way, such that the arising bilinear form is amenable to reduced basis technology. Exponential convergence is observed numerically.
	
	Adding to the difficulty of their non-local nature, the challenge of fractional differential equations is the fact that one is usually interested in the entire family of solutions $(u(s))_{s\in(0,1)}$. We refer to \cite{sMapAppl2} and \cite{sMapAppl1}, where the fractional order is utilized as additional tool to fit the mathematical model to the observed data. Furthermore, the authors of \cite{FamilySolMotivation} deal with a setting where $s = s(x)$ is a function of a spatial variable, which is why $s\mapsto u(s)$ can be seen as many-query problem. This is a challenge we particularly address in the development of our algorithm.
	
	We perform the approximation of $(u(s))_{s\in(0,1)}$ in two stages. Inspired by \cite{RBMForFracDiff}, we make use of the K-method \cite{RealMethoOfInt} to derive an integral representation of $u(s)$, whose evaluation requires the knowledge of $\minim{}(t)$ for all $t\in\R^+$. The first level of discretization consists of a standard finite element method that replaces the continuous solution $\minim{}(t)$ by its approximate counterpart $\minim{h}(t)$. The discrete integrand $\minim{h}(t)$ depends smoothly on $t$ and thus resides on a low-dimensional manifold. This justifies, in the second step, the usage of reduced basis technology, which seeks to approximate the entire family of solutions $(\minim{h}(t))_{t\in\R^+}$ by a finite selection $\Vr := \Span\{\minim{h}(t_0),...,\minim{h}(t_r)\}$ at a few strategical locations. The arising reduced basis integral is evaluated directly and does not require quadrature approximation. Its computation amounts to the assembly of $L$, a matrix approximation of the operator, whose inverse is projected to $\Vr$, where evaluations of its fractional powers can be determined directly. The construction of the reduced space is universal for all $s\in(0,1)$ and hence not restricted to proper subsets. Based on the analysis, we provide an optimal choice of sampling points for the reduced basis procedure. As opposed to prior works, they are given in closed form by means of the Zolotar\"ev points and, beside the knowledge of the extremal eigenvalues, do not require any further computations. Demanding slightly higher regularity assumptions than in \cite{RBMBonito}, we rigorously prove exponential convergence rates that only depend logarithmically on the condition number of the discrete operator.
	
	Inversion of $L$ becomes computationally prohibitive as the problem-size increases. To avoid this inconvenience, we present a second approach which directly projects the matrix in question to the small subspace where its negative fractional power is computed. Identical convergence rates as in the previous case are observed empirically.	We conclude this introduction by pointing out that the proposed method can be seen as model order reduction for the extension method without requiring truncation of the domain.
	
	The remainder of this paper is organized as follows. In Section \ref{SecInt} we provide a brief synopsis of the theory of interpolation spaces and their dual counterparts on which the proposed methodology is built upon. We design two different reduced basis procedures to approximate dual interpolation norms and solutions to \eqref{SecInt:ModelProblem} in Section \ref{SecRBM}, followed by a proof of its rapid convergence in Section \ref{SecAnalysis} for one of those algorithms. The purpose of Section \ref{SecNumericalEx} is a numerical comparison of the different approximations in order to highlight their performance by a variety of experiments. Finally, in the Appendix, we prove a couple of assertions whose validity has already been affirmed in the literature in some special cases, but not in the general framework that fits our setting. 
	
	\section{Notation and Preliminaries}\label{SecInt}
	Throughout what follows, by $a\preceq b$ we mean that $a\leq Cb$ for some generic constant $C\in\R^+$ which is independent of $a$, $b$, and the discretization parameters $h$ and $r$. The $s^{th}$ power of any symmetric matrix $A\in\R^{n\times n}$, $n\in\N$, is defined by diagonalization, i.e., $A^s := \Phi\Lambda^s\Phi^{-1}$, where $\Phi\in\R^{n\times n}$ denotes the matrix of column-wise arranged eigenvectors of $A$ and $\Lambda^s$ the involved diagonal matrix, containing the $s^{th}$ power of all corresponding eigenvalues. If $A$ is also positive definite, we set
	\begin{align*}
		\Norm{x}{A}^2 :=x^TAx, \qquad \scp{x}{y}{A} := x^TAy
	\end{align*} 
	for all $x,y\in\R^n$. Given a Banach space $(\Hilbert{},\Norm{\cdot}{\Hilbert{}})$, whose norm satisfies parallelogram law, we refer to the induced scalar product of $\Norm{\cdot}{\Hilbert{}}$ as the unique scalar product $\scp{\cdot}{\cdot}{\Hilbert{}}$ on $\Hilbert{}$ that satisfies $\Norm{\cdot}{\Hilbert{}} = \sqrt{\scp{\cdot}{\cdot}{\Hilbert{}}}$.
	Whenever referring to a Banach space $(\Hilbert{},\Norm{\cdot}{\Hilbert{}})$ as Hilbert space, we mean that its norm induces a scalar product $\scp{\cdot}{\cdot}{\Hilbert{}}$, such that $(\Hilbert{},\scp{\cdot}{\cdot}{\Hilbert{}})$ is a Hilbert space. By $\Hilbert{}'$ we denote the topological dual space of $\Hilbert{}$ henceforth, endowed with its natural norm 	
	\begin{align*}
		\Norm{F}{\Hilbert{}'} := \sup_{v\in\Hilbert{}}\frac{\scp{F}{v}{}}{\Norm{v}{\Hilbert{}}},
	\end{align*}
	where $\scp{\cdot}{\cdot}{}$ refers to the duality pairing. The norm comes from an inner product
	\begin{align*}
		\scp{F}{G}{\Hilbert{}'} = \scp{F}{\RieszV G}{},
	\end{align*}
	with $\RieszV:\Hilbert{}'\longrightarrow\Hilbert{}$ denoting the Riesz-isomorphism. The dual of a linear operator $T:\Veins\longrightarrow\Vnull$ between two Hilbert spaces is understood as the linear operator $T':\Vnull'\longrightarrow\Veins'$ uniquely defined by
	\begin{align*}
		\scp{F}{Tu}{} = \scp{T'F}{u}{}
	\end{align*}
	for all $u\in\Veins$ and $F\in\Vnull'$.
	
	\subsection{Hilbert space interpolation}\label{Sec:IntSpaces}
	On the basis of \cite{NonHomBVPandAppl}, \cite{Triebel}, \cite{IntToSob&IntSpac}, \cite{RealMethoOfInt}, \cite{IntSpaces}, and \cite{Bramble}, we briefly review the concept of Hilbert space interpolation. A pair of Hilbert spaces $(\Vnull,\Veins)$ with norms $\Norm{\cdot}{i}$, $i=0,1$, is admissible for space interpolation, if $\Veins\subset\Vnull$ is dense with compact embedding. In this case we call $(\Vnull,\Veins)$ interpolation couple. Fredholm theory ensures the existence of an orthonormal basis of eigenfunctions $(\varphi_k)_{k=1}^\infty$ of $\Vnull$ and a sequence of corresponding eigenvalues $\Family{\lambda_k^2}{k=1}{\infty}\subset\R^+$ with $\lambda_k^2\longrightarrow\infty$ as $k\to\infty$, such that
	\begin{align*}
		\llap{$\forall v\in \mathscr{V}_1:$\quad} \scp{\varphi_k}{v}{1} = \lambda_k^2\scp{\varphi_k}{v}{0}
	\end{align*}
	for all $k\in\mathbb{N}$. The {Hilbert interpolation norm then reads as
	\begin{align}\label{SecIntro:HilbertIntNorm}
		\|u\|_{\textrm{H}^s(\mathscr{V}_0,\mathscr{V}_1)}^2 := \sum_{k = 1}^{\infty}\lambda_k^{2s}\scp{u}{\varphi_k}{0}^2
	\end{align}
	for each $s\in(0,1)$. The arising interpolation space between $\Vnull$ and $\Veins$ is defined by
	\begin{align*}
		[\mathscr{V}_0,\mathscr{V}_1]_{s} := \{u\in \mathscr{V}_0: \IntNorm{u}{H}{s}<\infty\}
	\end{align*} 
	and turns out to be a Hilbert space itself. It is well-known that \eqref{SecIntro:HilbertIntNorm} admits an integral representation by means of the K-method. For each $u\in\Vnull$ we define the K-functional of $\IntCouple$ by
	\begin{align*}
		\Krm_{(\Vnull,\Veins)}^2(t;u) := \inf\limits_{v\in \Veins}\|u-v\|_0^2 + t^2\|v\|_1^2, \rlap{\qquad$t\in\R^+.$}
	\end{align*}	
	With this at hand, we introduce the K-norm as
	\begin{align*}
		\IntNorm{u}{K}{s}^2 := \int_0^\infty t^{-2s-1}\Krm_{\IntCouple}^2(t;u)\,dt.
	\end{align*}
	There holds
	\begin{align}\label{SecIntro:NormEquality}
		\IntNorm{\cdot}{H}{s} = C_s\IntNorm{\cdot}{K}{s}, \rlap{\qquad$C_s^2 := \frac{2\sin(\pi s)}{\pi},$}
	\end{align}
	see e.g., \cite{Bramble} for details.
	
	Here, and in all following, we define the Hilbert interpolation operator of $(\Vnull,\Veins)$ as the unique operator $\IntOp{H}{s}:\TheIntSpace\longrightarrow\Vnull$ that satisfies
	\begin{align*}
		\scp{v}{\IntOp{H}{s} u}{0} = \IntScp{v}{u}{H}{s}
	\end{align*}
	for all $v\in\Veins$, where $\IntScp{\cdot}{\cdot}{H}{s}$ labels the induced scalar product of $\IntNorm{\cdot}{H}{s}$. The K-interpolation operator $\IntOp{K}{s}$ is understood respectively. Due to \eqref{SecIntro:NormEquality}, there holds
	\begin{align}\label{SecIntro:IntOpEquality}
		\IntOp{H}{s} = C_s^2\IntOp{K}{s}.
	\end{align} 
	For any $u\in\TheIntSpace$ the action of the Hilbert interpolation operator is given by
	\begin{align*}
		\IntOp{H}{s}u = \Sum{k=1}{\infty}\lambda_k^{2s}\scp{\varphi_k}{u}{0}\varphi_k.
	\end{align*}
	An explicit representation of its latter counterpart requires further preparation.
	\begin{Lemma}\label{Lm:MinimDecomp}
		For each $u\in\Vnull$ and $t\in\R^+$ the minimizer $\minim{}(t)$ of $\K_{(\Vnull,\Veins)}^2\left(t;u\right)$ is the unique solution of
		\begin{align*}
			\scp{\minim{}(t)}{w}{0} + t^2\scp{\minim{}(t)}{w}{1} = \scp{u}{w}{0}
		\end{align*}
		for all $w\in\Veins$. It satisfies
		\begin{align*}
			\minim{}(t) = \Sum{k=1}{\infty}\frac{\scp{u}{\varphi_k}{0}}{1+t^2\lambda_k^2}\varphi_k.
		\end{align*}
	\end{Lemma}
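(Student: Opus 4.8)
The plan is to split the claim into three pieces: a variational (Euler--Lagrange) characterization of the minimizer of $\Krm_{\IntCouple}^2(t;\cdot)$ together with its uniqueness, the verification that the asserted series solves this characterization, and the combination of the two.

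\emph{Step 1 (variational characterization and uniqueness).} I would fix $u\in\Vnull$, $t\in\R^+$, and set $J(v):=\Norm{u-v}{0}^2+t^2\Norm{v}{1}^2$ for $v\in\Veins$. Expanding the two squared norms gives, for all $v,w\in\Veins$,
\begin{align*}
 J(v+w)=J(v)+2\bigl(\scp{v}{w}{0}+t^2\scp{v}{w}{1}-\scp{u}{w}{0}\bigr)+\Norm{w}{0}^2+t^2\Norm{w}{1}^2 .
\end{align*}
Hence, if $v\in\Veins$ satisfies $\scp{v}{w}{0}+t^2\scp{v}{w}{1}=\scp{u}{w}{0}$ for every $w\in\Veins$, then $J(v+w)=J(v)+\Norm{w}{0}^2+t^2\Norm{w}{1}^2\ge J(v)$, with equality only if $\Norm{w}{1}=0$, i.e. $w=0$; so such a $v$ is \emph{the} minimizer of $J$. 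Conversely, if $v$ minimizes $J$ then $\varepsilon\mapsto J(v+\varepsilon w)$ is a quadratic polynomial with a minimum at $\varepsilon=0$, so its derivative there vanishes, which is precisely the variational identity. Finally the identity has at most one solution: the difference $z$ of two solutions obeys $\scp{z}{w}{0}+t^2\scp{z}{w}{1}=0$ for all $w\in\Veins$, and $w=z$ forces $\Norm{z}{0}^2+t^2\Norm{z}{1}^2=0$, so $z=0$.

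\emph{Step 2 (the series is a solution).} Here I would use the spectral data. Since $(\varphi_k)_k$ is an orthonormal basis of $\Vnull$, testing the eigenrelation against $\varphi_j$ yields $\scp{\varphi_j}{\varphi_k}{1}=\lambda_j^2\delta_{jk}$, so $(\lambda_k^{-1}\varphi_k)_k$ is an orthonormal system in $\Veins$, and it is complete there because $\scp{w}{\varphi_k}{1}=\lambda_k^2\scp{w}{\varphi_k}{0}=0$ for all $k$ forces $w=0$ in $\Vnull$, hence in $\Veins$; in particular $\Span\{\varphi_k:k\in\N\}$ is dense in $\Veins$. Put $c_k:=\scp{u}{\varphi_k}{0}/(1+t^2\lambda_k^2)$ and $v:=\sum_k c_k\varphi_k$. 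The elementary bound $x/(1+x)^2\le\tfrac14$ with $x=t^2\lambda_k^2$ gives $c_k^2\lambda_k^2\le(4t^2)^{-1}\scp{u}{\varphi_k}{0}^2$, so $\sum_k c_k^2\lambda_k^2\le(4t^2)^{-1}\Norm{u}{0}^2<\infty$; since $(\lambda_k^{-1}\varphi_k)_k$ is orthonormal in $\Veins$, the partial sums of $v$ are Cauchy in $\Veins$, so $v\in\Veins$. Testing against $w=\varphi_j$,
\begin{align*}
 \scp{v}{\varphi_j}{0}+t^2\scp{v}{\varphi_j}{1}=c_j+t^2\lambda_j^2 c_j=c_j(1+t^2\lambda_j^2)=\scp{u}{\varphi_j}{0}.
\end{align*}
Both sides of the variational identity are bounded linear functionals of $w\in\Veins$ — on the left one bounds $\scp{v}{w}{0}$ using continuity of the embedding $\Veins\hookrightarrow\Vnull$ — and they agree on the dense subspace $\Span\{\varphi_k\}$, hence on all of $\Veins$.

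\emph{Step 3 (conclusion and anticipated obstacle).} Combining the two steps: by Step 2 the series $v=\sum_k c_k\varphi_k$ solves the variational identity, and by Step 1 that identity has a unique solution which coincides with the (unique) minimizer $\minim{}(t)$ of $\Krm_{\IntCouple}^2(t;u)$; this delivers simultaneously the variational characterization and the claimed spectral formula. The routine parts are the norm expansion and the spectral bookkeeping; the only points I expect to need real care are the $\Veins$-convergence of the series, controlled by $x/(1+x)^2\le\tfrac14$, and the upgrade of the identity from the test functions $\varphi_j$ to arbitrary $w\in\Veins$, which rests on the density of $\Span\{\varphi_k\}$ in $\Veins$ established above. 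I do not anticipate a serious obstacle beyond these two technicalities.
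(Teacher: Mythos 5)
Your argument is correct and takes essentially the route the paper itself delegates to the cited finite-dimensional lemma (``in complete analogy to'' \cite{RBMForFracDiff}): the Euler--Lagrange characterization of the minimizer of the K-functional combined with verification of the spectral series in the eigenbasis. The only genuinely infinite-dimensional points --- convergence of the series in $\Veins$ via the bound $x/(1+x)^2\le\tfrac14$ and the density of $\Span\{\varphi_k:k\in\N\}$ in $\Veins$, needed to pass from the test functions $\varphi_j$ to all $w\in\Veins$ --- are exactly the details the analogy glosses over, and you supply them correctly.
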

	\begin{proof}
		The proof is done in complete analogy to \cite[Lemma 3.4]{RBMForFracDiff}, where the claim has been shown for the finite dimensional case.
	\end{proof}
	In the spirit of \cite{Pasciak}, we deduce an appealing integral representation of the K-interpolation operator. This has already been done for the finite-dimensional case in \cite{RBMForFracDiff}, but also applies to this general setting, as the following theorem shows.
	\begin{Th}\label{Th:IntRepr}
		Let $s\in(0,1)$, $u\in\TheIntSpace$, and $\minim{}(t)$ as in Lemma \ref{Lm:MinimDecomp}. Then there holds
		\begin{align*}
			\IntOp{K}{s}u = \int_{0}^{\infty}t^{-2s-1}\left(u - \minim{}(t)\right)dt.
		\end{align*}
	\end{Th}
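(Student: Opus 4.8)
The plan is to compute both sides against the spectral basis $(\varphi_k)_{k=1}^\infty$ and reduce the identity to a scalar integral. Since $\IntOp{K}{s}u = C_s^{-2}\IntOp{H}{s}u = C_s^{-2}\Sum{k=1}{\infty}\lambda_k^{2s}\scp{\varphi_k}{u}{0}\varphi_k$ by \eqref{SecIntro:IntOpEquality} and the explicit action of $\IntOp{H}{s}$, it suffices to show that the $k$-th coefficient of the right-hand side equals $C_s^{-2}\lambda_k^{2s}\scp{\varphi_k}{u}{0}$. Using Lemma \ref{Lm:MinimDecomp}, one has $u - \minim{}(t) = \Sum{k=1}{\infty}\bigl(1 - \frac{1}{1+t^2\lambda_k^2}\bigr)\scp{u}{\varphi_k}{0}\varphi_k = \Sum{k=1}{\infty}\frac{t^2\lambda_k^2}{1+t^2\lambda_k^2}\scp{u}{\varphi_k}{0}\varphi_k$, so formally
\begin{align*}
	\int_0^\infty t^{-2s-1}\bigl(u - \minim{}(t)\bigr)\,dt = \Sum{k=1}{\infty}\left(\int_0^\infty t^{-2s-1}\frac{t^2\lambda_k^2}{1+t^2\lambda_k^2}\,dt\right)\scp{u}{\varphi_k}{0}\varphi_k.
\end{align*}
The substitution $\tau = t\lambda_k$ turns the inner integral into $\lambda_k^{2s}\int_0^\infty \tau^{1-2s}(1+\tau^2)^{-1}\,d\tau$, and the standard beta-function evaluation $\int_0^\infty \tau^{1-2s}(1+\tau^2)^{-1}\,d\tau = \frac{\pi}{2\sin(\pi s)} = C_s^{-2}$ (valid for $s\in(0,1)$ since then $0 < 1-2s+1 = 2-2s < 2$ guarantees convergence at both endpoints) gives exactly the $k$-th coefficient $C_s^{-2}\lambda_k^{2s}\scp{u}{\varphi_k}{0}$, as desired.

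The nonroutine part is justifying the interchange of the infinite sum and the improper integral, and making sense of the $\Vnull$-valued integral in the first place. I would argue this by splitting $\int_0^\infty = \int_0^1 + \int_1^\infty$. On $(0,1)$ the integrand is bounded in $\Vnull$ by $t^{1-2s}\Norm{u}{0}$ (since $\frac{t^2\lambda_k^2}{1+t^2\lambda_k^2}\le 1$ uniformly, so $\Norm{u-\minim{}(t)}{0}\le\Norm{u}{0}$), which is integrable near $0$ as $1-2s>-1$; on $(1,\infty)$ I bound $\Norm{u-\minim{}(t)}{0}^2 = \Sum{k=1}{\infty}\bigl(\frac{t^2\lambda_k^2}{1+t^2\lambda_k^2}\bigr)^2\scp{u}{\varphi_k}{0}^2$ — here one needs the extra decay coming from $u\in\TheIntSpace$: using $\frac{t^2\lambda_k^2}{1+t^2\lambda_k^2}\le t^{2s}\lambda_k^{2s}\cdot\frac{(t^2\lambda_k^2)^{1-s}}{1+t^2\lambda_k^2}\cdot t^{-2s}\lambda_k^{-2s}\cdots$ more cleanly, $\frac{t^2\lambda_k^2}{1+t^2\lambda_k^2}\le \min\{1,t^2\lambda_k^2\}\le (t\lambda_k)^{2-2s}\cdot(t\lambda_k)^{2s-2}$ is awkward, so I would instead use $\frac{t^2\lambda_k^2}{1+t^2\lambda_k^2}\le (t^2\lambda_k^2)^s$ for $t\lambda_k\le 1$ and $\le 1$ otherwise, wait—better: simply note $\frac{x}{1+x}\le x^{1-s}$ for all $x\ge 0$, hence $\Norm{u-\minim{}(t)}{0}^2\le t^{2(1-s)}\Sum{k=1}{\infty}\lambda_k^{2(1-s)}\scp{u}{\varphi_k}{0}^2$. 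For $t\ge 1$ this is not yet enough; instead I use $\frac{x}{1+x}\le x^{-\epsilon}$... The cleanest bound is $\frac{x}{1+x}\le \min\{1,\sqrt{x}\}$-type interpolation giving, for $t\ge 1$, $\frac{t^2\lambda_k^2}{1+t^2\lambda_k^2}\le (t\lambda_k)^{2s-1}$ — hmm. I will therefore present the convergence argument via the dominated-convergence theorem on the truncated integrals $\int_\delta^{1/\delta}$, where on each compact $t$-interval the sum $u-\minim{}(t)$ converges in $\Vnull$ uniformly in $t$ (geometric-type tail bound since $\lambda_k^2\to\infty$), Fubini applies trivially, and then let $\delta\to0$ using the two integrable majorants above (near $0$: $t^{1-2s}\Norm{u}{0}$; near $\infty$: $\Norm{u-\minim{}(t)}{0}\le\Norm{u}{0}$ combined with the fact that the scalar integrals $\int_1^\infty t^{-2s-1}\frac{t^2\lambda_k^2}{1+t^2\lambda_k^2}\,dt$ are summable against $\scp{u}{\varphi_k}{0}^2$ after the substitution, so the vector-valued tail integral converges absolutely in $\Vnull$).

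Concretely I would organize the write-up as: (i) recall from Lemma \ref{Lm:MinimDecomp} the spectral form of $u-\minim{}(t)$ and observe the coefficient decay $\bigl(\frac{t^2\lambda_k^2}{1+t^2\lambda_k^2}\bigr)\scp{u}{\varphi_k}{0}$; (ii) show the $\Vnull$-valued Bochner integral $\int_0^\infty t^{-2s-1}(u-\minim{}(t))\,dt$ exists by establishing $\int_0^\infty t^{-2s-1}\Norm{u-\minim{}(t)}{0}\,dt<\infty$ through the split at $t=1$, using $\Norm{u-\minim{}(t)}{0}\le\Norm{u}{0}$ near $t=\infty$ together with $\Norm{u-\minim{}(t)}{0}\preceq t\,\|u\|_1$-type or, since $u$ need only lie in $\Vnull$ for $\minim{}$ yet in $\TheIntSpace$ for $\IntOp{K}{s}u$, the bound $\Norm{u-\minim{}(t)}{0}^2\le t^{2}\sum_k\lambda_k^2\frac{\scp{u}{\varphi_k}{0}^2}{(1+t^2\lambda_k^2)}$; (iii) pair both sides of the claimed identity with an arbitrary $\varphi_j$, interchange integral and inner product (justified by Bochner integrability), compute the scalar integral by the substitution $\tau=t\lambda_j$ and the beta-integral value $C_s^{-2}$, obtaining $\scp{\varphi_j}{\int_0^\infty t^{-2s-1}(u-\minim{}(t))\,dt}{0} = C_s^{-2}\lambda_j^{2s}\scp{\varphi_j}{u}{0} = \scp{\varphi_j}{\IntOp{K}{s}u}{0}$; (iv) conclude by density/completeness of $(\varphi_j)_j$ in $\Vnull$ that the two elements of $\Vnull$ coincide. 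The main obstacle is step (ii)–(iii), i.e., the integrability and interchange bookkeeping near $t=\infty$, which is exactly where the hypothesis $u\in\TheIntSpace$ (rather than merely $u\in\Vnull$) is needed to make the vector-valued integral converge; everything else is the elementary beta-function computation already used implicitly in \eqref{SecIntro:NormEquality}.
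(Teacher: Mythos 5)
Your core computation is correct, and it is a genuinely different route from the paper's: the paper never evaluates a beta integral, but instead proves the scalar identity $\Krm_{(\Vnull,\Veins)}^2(t;u)=\Norm{u}{0}^2-\scp{u}{\minim{}(t)}{0}=\scp{u}{u-\minim{}(t)}{0}$, after which the representation follows from the definition of the K-norm and of $\IntOp{K}{s}$ (polarization in $u$), with the constant $C_s$ never entering. You instead integrate the spectral coefficients directly and then invoke \eqref{SecIntro:IntOpEquality}, i.e.\ the known constant from \eqref{SecIntro:NormEquality}; that is legitimate (not circular, since \eqref{SecIntro:NormEquality} is quoted from the literature) and gives a fully explicit verification, at the price of having to make sense of the vector-valued integral --- which is exactly where your write-up breaks down.

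The gap is in step (ii): your endpoint bookkeeping is swapped, and the Bochner integrability you aim for is in fact unavailable under the stated hypothesis. At $t=\infty$ no extra decay is needed at all, since $\Norm{u-\minim{}(t)}{0}\le\Norm{u}{0}$ and $t^{-2s-1}$ is integrable on $(1,\infty)$; the delicate endpoint is $t=0$. There your claimed majorant $t^{1-2s}\Norm{u}{0}$ does not follow from $\Norm{u-\minim{}(t)}{0}\le\Norm{u}{0}$ (which only yields the non-integrable $t^{-2s-1}\Norm{u}{0}$) and is false in general: for $u=\varphi_k$ one has $\Norm{u-\minim{}(t)}{0}=\frac{t^2\lambda_k^2}{1+t^2\lambda_k^2}$, which is not bounded by $t^2$ for small $t$ once $\lambda_k>1$. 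Under $u\in\TheIntSpace$ the best uniform decay is $\Norm{u-\minim{}(t)}{0}\le t^{s}\IntNorm{u}{H}{s}$ (use $\bigl(\tfrac{x}{1+x}\bigr)^2\le\min\{1,x\}\le x^{s}$ with $x=t^2\lambda_k^2$), and $t^{-2s-1}\cdot t^{s}=t^{-s-1}$ is not integrable at $0$; this loss is essentially sharp, so $\int_0^1 t^{-2s-1}\Norm{u-\minim{}(t)}{0}\,dt$ diverges for general $u\in\TheIntSpace$ --- consistent with the fact that even the left-hand side $C_s^{-2}\sum_k\lambda_k^{2s}\scp{u}{\varphi_k}{0}\varphi_k$ need not converge in $\Vnull$ for such $u$. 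Hence a $\Vnull$-valued Bochner reading (and your concluding step (iv), which presumes the integral is an element of $\Vnull$) cannot work as stated. The repair is cheap and keeps your computation intact: read the identity weakly, as the definition of $\IntOp{K}{s}$ via $\scp{v}{\IntOp{K}{s}u}{0}=\IntScp{v}{u}{K}{s}$ suggests. Pair with an arbitrary $v\in\Veins$; the resulting sum--integral of $t^{-2s-1}\frac{t^2\lambda_k^2}{1+t^2\lambda_k^2}\scp{u}{\varphi_k}{0}\scp{v}{\varphi_k}{0}$ has the summable majorant with total mass $C_s^{-2}\sum_k\lambda_k^{2s}|\scp{u}{\varphi_k}{0}||\scp{v}{\varphi_k}{0}|\le C_s^{-2}\IntNorm{u}{H}{s}\bigl(\sum_k\lambda_k^{2s}\scp{v}{\varphi_k}{0}^2\bigr)^{1/2}<\infty$ for $u\in\TheIntSpace$, $v\in\Veins$, so Tonelli--Fubini licenses the interchange, and your beta-function evaluation gives $\int_0^\infty t^{-2s-1}\scp{v}{u-\minim{}(t)}{0}\,dt=C_s^{-2}\sum_k\lambda_k^{2s}\scp{u}{\varphi_k}{0}\scp{v}{\varphi_k}{0}=\IntScp{v}{u}{K}{s}$, which is the assertion. (Alternatively, prove the K-functional identity quoted above, as the paper does, which avoids the vector-valued integral altogether.)
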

	\begin{proof}
		See Appendix.
	\end{proof}
		
	The ambition of this investigation is to approximate the action of the inverse Hilbert interpolation operator
	\begin{align}\label{SecIntro:InverseIntOp}
		\IntOp{H}{s}^{-1}f = \Sum{k=1}{\infty}\lambda_k^{-2s}\scp{f}{\varphi_k}{0}\varphi_k
	\end{align}
	in $f$ and $s$.	To achieve this, we rewrite \eqref{SecIntro:InverseIntOp} as classical interpolation operator in a dual setting which allows us to exploit the reduced basis procedure from \cite{RBMForFracDiff}.

	\subsection{Interpolation of the dual spaces}\label{SecDualInterpolation}
	Let $\Hilbert{-1} := \Veins'$ denote the topological dual space of $\Veins$ and $\Riesz:\Hilbert{-1}\longrightarrow\Veins$ its Riesz-isomorphism, such that
	\begin{align*}
		\llap{$\forall v\in\Veins:$\quad}\scp{\Riesz F}{v}{1} = \scp{F}{v}{}
	\end{align*}
	for any  $F\in\Hilbert{-1}$. As usual, we identify each function $f\in\Vnull$ with the functional $F\in\Hilbert{-1}$ in terms of
	\begin{align}\label{SecIntro:Identification}
		\llap{$\forall v\in\Veins:$\quad}\scp{F}{v}{} = \scp{f}{v}{0}.
	\end{align}
	Along with this identification, we have that $\Veins\subset\Vnull\cong\Vnull'\subset\Hilbert{-1}$, which is why we do not distinguish between $\Vnull$ and $\Vnull'$ henceforth. As $\iota:\Veins\longrightarrow\Vnull$ denotes the embedding of $(\Vnull,\Veins)$, $\iota'$ is compact and injective with dense range, i.e., the pairing $(\Hilbert{-1},\Vnull)$ is an interpolation couple itself, see e.g., \cite[Chapter 41]{IntToSob&IntSpac}. As shown in \cite[Theorem 6.2]{NonHomBVPandAppl}, there further holds
	\begin{align*}
		\TheIntSpace' = [\Hilbert{-1},\Vnull]_{1-s},
	\end{align*}
	which is why we refer to $\IntNormDual{\cdot}{H}{s}$ and $[\Hilbert{-1},\Vnull]_{s}$ as dual interpolation norm and space. Consecutive elaborations provide an understanding how $\TheIntSpace$ and $[\Hilbert{-1},\Vnull]_{s}$ can be seen as single scale of interpolation spaces for $s\in(-1,1)$, see also \cite{Pasciak} for the case $\Vnull = L_2(\Omega)$ and $\Veins = H_0^1(\Omega)$. In view of \eqref{SecIntro:Identification}, we denote with $(\Psi_k,\widehat{\lambda}_k^2)_{k=1}^\infty$ the system of $\Hilbert{-1}$-orthonormal eigenfunctions, such that
	\begin{align*}
		\scp{\Psi_k}{f}{} = \widehat{\lambda}_k^2\scp{\Psi_k}{F}{-1}
	\end{align*}
	for all $F\in\Vnull$. Referring to $(\Phi_k)_{k=1}^\infty$ as the family of functionals associated to $(\varphi_k)_{k=1}^\infty$, these dual eigenpairs can be expressed in the following convenient manner.
	\begin{Th}\label{Thm:DualEigenpairs}
		For all $k\in\N$ there holds
		\begin{align*}
			(\Psi_k,\widehat{\lambda}_k^2) = (\lambda_k\Phi_k,\lambda_k^2).
		\end{align*}
	\end{Th}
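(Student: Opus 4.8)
The goal is to identify the $\Hilbert{-1}$-orthonormal eigenpairs $(\Psi_k,\widehat\lambda_k^2)$ with $(\lambda_k\Phi_k,\lambda_k^2)$, where $(\varphi_k,\lambda_k^2)$ is the given eigensystem of the couple $(\Vnull,\Veins)$ and $\Phi_k$ is the functional associated to $\varphi_k$ via the identification \eqref{SecIntro:Identification}. The natural strategy is to verify directly that the proposed candidates satisfy the defining generalized eigenvalue relation and the correct normalization. First I would compute $\Riesz\Phi_k$: by definition of the Riesz isomorphism, $\scp{\Riesz\Phi_k}{v}{1} = \scp{\Phi_k}{v}{} = \scp{\varphi_k}{v}{0}$ for all $v\in\Veins$, and comparing with the generalized eigenrelation $\scp{\varphi_k}{v}{1} = \lambda_k^2\scp{\varphi_k}{v}{0}$ we get $\scp{\Riesz\Phi_k}{v}{1} = \lambda_k^{-2}\scp{\varphi_k}{v}{1}$ for all $v\in\Veins$, hence $\Riesz\Phi_k = \lambda_k^{-2}\varphi_k$. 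This is the key computational identity; everything else follows from bookkeeping.

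**Normalization.** With $\Riesz\Phi_k = \lambda_k^{-2}\varphi_k$ in hand, I would compute the $\Hilbert{-1}$-inner product of the functionals $\Phi_k$: since $\scp{F}{G}{\Hilbert{-1}} = \scp{F}{\Riesz G}{}$, we obtain $\scp{\Phi_j}{\Phi_k}{-1} = \scp{\Phi_j}{\Riesz\Phi_k}{} = \lambda_k^{-2}\scp{\Phi_j}{\varphi_k}{} = \lambda_k^{-2}\scp{\varphi_j}{\varphi_k}{0} = \lambda_k^{-2}\delta_{jk}$, using $\Vnull$-orthonormality of $(\varphi_k)$. Therefore $\Norm{\Phi_k}{\Hilbert{-1}} = \lambda_k^{-1}$, so $\Psi_k := \lambda_k\Phi_k$ is $\Hilbert{-1}$-orthonormal, which is the normalization required of the dual eigensystem.

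**The eigenrelation.** It remains to check that $(\Psi_k,\lambda_k^2)$ satisfies $\scp{\Psi_k}{f}{} = \widehat\lambda_k^2\scp{\Psi_k}{F}{-1}$ for all $F\in\Vnull$, where on the left $f\in\Vnull$ is identified with $F\in\Hilbert{-1}$ via \eqref{SecIntro:Identification}. The left side is $\lambda_k\scp{\Phi_k}{f}{} = \lambda_k\scp{\varphi_k}{f}{0}$. For the right side, $\scp{\Psi_k}{F}{-1} = \lambda_k\scp{\Phi_k}{F}{-1} = \lambda_k\scp{\Phi_k}{\Riesz F}{}$; since $F$ corresponds to $f\in\Vnull$, and $\Riesz F\in\Veins$, we have $\scp{\Phi_k}{\Riesz F}{} = \scp{\varphi_k}{\Riesz F}{0}$, and then using $\scp{\varphi_k}{\Riesz F}{1} = \lambda_k^2\scp{\varphi_k}{\Riesz F}{0}$ together with $\scp{\varphi_k}{\Riesz F}{1} = \scp{F}{\varphi_k}{} = \scp{f}{\varphi_k}{0}$ gives $\scp{\varphi_k}{\Riesz F}{0} = \lambda_k^{-2}\scp{f}{\varphi_k}{0}$. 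Hence the right side equals $\widehat\lambda_k^2\cdot\lambda_k\cdot\lambda_k^{-2}\scp{f}{\varphi_k}{0} = \lambda_k^{-1}\widehat\lambda_k^2\scp{f}{\varphi_k}{0}$, which matches the left side precisely when $\widehat\lambda_k^2 = \lambda_k^2$. Finally I would invoke uniqueness: the generalized eigensystem of an interpolation couple is determined (up to the usual ambiguity within eigenspaces) by the orthonormality and the eigenrelation, so the candidates must coincide with $(\Psi_k,\widehat\lambda_k^2)$.

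**Main obstacle.** The only genuinely delicate point is keeping the identification \eqref{SecIntro:Identification} between $\Vnull$ and a subspace of $\Hilbert{-1}$ consistent throughout — in particular making sure that "$\scp{\Psi_k}{f}{}$" on the left of the dual eigenrelation is read as the duality pairing of $\Psi_k\in\Hilbert{-1}$ with the element of $\Veins$ obtained by... no: here $f$ sits in $\Vnull$ and the pairing is the one extending the $\Vnull$-inner product, whereas $\scp{\Psi_k}{F}{-1}$ is the honest Hilbert inner product on $\Hilbert{-1}$. Disentangling which object lives in which space at each step, and confirming that the density/compactness hypotheses legitimately give a complete eigensystem for $(\Hilbert{-1},\Vnull)$ so that the uniqueness argument applies, is where care is needed; the algebra itself is routine once $\Riesz\Phi_k = \lambda_k^{-2}\varphi_k$ is established.
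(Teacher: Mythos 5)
Your verification is correct and follows essentially the same route as the paper: the paper's proof is the single chain $\scp{F}{\varphi_k}{} = \scp{\Riesz F}{\varphi_k}{1} = \lambda_k^2\scp{\Riesz F}{\varphi_k}{0} = \lambda_k^{2}\scp{\Phi_k}{F}{-1}$, which is your identity $\Riesz\Phi_k = \lambda_k^{-2}\varphi_k$ in disguise, and it delivers your normalization $\Norm{\Phi_k}{\Hilbert{-1}} = \lambda_k^{-1}$ and the eigenrelation in one stroke.

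The one step you should not settle by appealing to ``uniqueness'' is completeness. Orthonormality plus the eigenrelation alone do not identify $(\lambda_k\Phi_k,\lambda_k^2)$ with $(\Psi_k,\widehat{\lambda}_k^2)$: a priori your family could be a proper orthonormal subfamily of the dual eigensystem, missing eigenvalues or multiplicities of the couple $(\Hilbert{-1},\Vnull)$ altogether. (Your ``main obstacle'' paragraph points at the existence of a complete eigensystem for the dual couple, but that is not the issue; what must be shown is that \emph{your candidate family} is maximal.) The paper closes this in one line: if $\scp{F}{\Phi_k}{-1} = \lambda_k^{-2}\scp{F}{\varphi_k}{} = 0$ for all $k\in\N$, then $\Riesz F\in\Veins$ is orthogonal to the complete system $(\varphi_k)_{k=1}^\infty$, so $F = 0$; hence $(\lambda_k\Phi_k)_{k=1}^\infty$ is an orthonormal \emph{basis} of eigenfunctions, and any other choice of $(\Psi_k,\widehat{\lambda}_k^2)$ differs from it only by the usual ambiguity within eigenspaces. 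With that one line added, your argument is exactly the paper's.
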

	\begin{proof}
		The claimed identity has been shown for $\Vnull = L_2(\Omega)$ and $\Veins = H_0^1(\Omega)$ in \cite[Proposition 4.1]{Pasciak}. The proof also holds for this more general setting. For completeness, however, we carry out its details in the Appendix.
	\end{proof}
	\begin{Def}
		For all $s\in(0,1)$ we define the Hilbert extrapolation norm on $[\Hilbert{-1},\Vnull]_{1-s}$ by
		\begin{align}\label{SecIntro:NegNorm}
			\IntNorm{F}{H}{-s}^2 := \Sum{k=1}{\infty}\lambda_k^{-2s}\scp{F}{\varphi_k}{}^2
		\end{align}
		and denote with $\IntScp{\cdot}{\cdot}{H}{-s}$ its induced scalar product. The Hilbert extrapolation operator $\IntOp{H}{-s}:[\Hilbert{-1},\Vnull]_{1-s}\longrightarrow\Vnull$ is defined by
		\begin{align*}
			\scp{f}{\IntOp{H}{-s}G}{0} = \IntScp{F}{G}{H}{-s}, \rlap{\qquad$F\in\Vnull.$}
		\end{align*}
	\end{Def}
	The following theorem shows that \eqref{SecIntro:NegNorm} is essentially the dual interpolation norm and therefore well-defined. It summarizes the key ingredients of this section, on which our reduced basis approach is built upon.
	\begin{Th}\label{Th:DualvsNegative}
		Let $s\in(0,1)$ and $F\in[\Hilbert{-1},\Vnull]_{1-s}$. Then there holds
		\begin{align}\label{SecIntro:NormEqualityNew}
			\IntNorm{F}{\textsc{H}}{-s} = \IntNormDual{F}{\textsc{H}}{1-s} = C_{1-s}\IntNormDual{F}{\textsc{K}}{1-s}
		\end{align}
		and
		\begin{align}\label{SecIntro:EqualOperators}
			\IntOp{H}{-s}F = \Riesz\IntOpDual{H}{1-s}F  = C_{1-s}^2\Riesz\IntOpDual{K}{1-s}F.
		\end{align}
		Moreover, if $F\in\Vnull$, the operator actions listed above coincide with $\IntOp{H}{s}^{-1}f$.
	\end{Th}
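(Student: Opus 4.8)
The plan is to establish the three assertions in turn, working entirely with the spectral expansions provided by Fredholm theory and the dual eigenpair identity of Theorem \ref{Thm:DualEigenpairs}. First I would prove the norm equality \eqref{SecIntro:NormEqualityNew}. The second equality there is just \eqref{SecIntro:NormEquality} applied to the interpolation couple $(\Hilbert{-1},\Vnull)$ with parameter $1-s$, so nothing new is needed. For the first equality I would expand $\IntNormDual{F}{\textsc{H}}{1-s}^2$ using the definition \eqref{SecIntro:HilbertIntNorm} of the Hilbert interpolation norm for the couple $(\Hilbert{-1},\Vnull)$: this reads $\sum_k \widehat{\lambda}_k^{2(1-s)}\scp{F}{\Psi_k}{-1}^2$, where $(\Psi_k,\widehat{\lambda}_k^2)$ are the $\Hilbert{-1}$-orthonormal eigenpairs. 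Here one must be careful that the roles of the two spaces are swapped relative to \eqref{SecIntro:HilbertIntNorm}: in the couple $(\Hilbert{-1},\Vnull)$ the "larger" space is $\Hilbert{-1}$ and the eigenvalue exponent that appears is $2(1-s)$ against the $\Hilbert{-1}$-inner product. Now substitute $\Psi_k = \lambda_k\Phi_k$ and $\widehat{\lambda}_k^2 = \lambda_k^2$ from Theorem \ref{Thm:DualEigenpairs}, and use the identification \eqref{SecIntro:Identification} together with $\scp{\Phi_k}{F}{-1} = \scp{F}{\RieszV\Phi_k}{} $ and the Riesz relation $\Riesz\Phi_k = \lambda_k^{-2}\varphi_k$ (which follows from $\scp{\Riesz\Phi_k}{v}{1} = \scp{\Phi_k}{v}{} = \scp{\varphi_k}{v}{0} = \lambda_k^{-2}\scp{\varphi_k}{v}{1}$). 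Tracking the powers of $\lambda_k$ carefully, $\widehat{\lambda}_k^{2(1-s)}\scp{F}{\Psi_k}{-1}^2$ collapses to $\lambda_k^{-2s}\scp{F}{\varphi_k}{}^2$, which is exactly the summand in \eqref{SecIntro:NegNorm}. Hence $\IntNorm{F}{\textsc{H}}{-s} = \IntNormDual{F}{\textsc{H}}{1-s}$.

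Next I would prove the operator identities \eqref{SecIntro:EqualOperators}. The second equality is immediate from \eqref{SecIntro:IntOpEquality} applied to the couple $(\Hilbert{-1},\Vnull)$ at parameter $1-s$, i.e.\ $\IntOpDual{H}{1-s} = C_{1-s}^2\IntOpDual{K}{1-s}$, post-composed with $\Riesz$. For the first equality I would test against an arbitrary $v\in\Veins$ (dense in everything in sight) and compute $\scp{v}{\IntOp{H}{-s}G}{0}$ versus $\scp{v}{\Riesz\IntOpDual{H}{1-s}G}{0}$. The left side is $\IntScp{F}{G}{\textsc{H}}{-s}$ by the defining relation of $\IntOp{H}{-s}$, where $F$ is the functional attached to $v$. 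The right side: writing $w := \IntOpDual{H}{1-s}G\in\Vnull$, I would use $\scp{v}{\Riesz w}{0}$ — but here $\Riesz w$ lies in $\Veins$ and $v\in\Veins$, so I should instead pair through the $\Veins$-inner product, $\scp{v}{\Riesz w}{0} = \scp{w}{\RieszV v}{}$-type manipulation, then invoke the defining relation $\scp{F}{\IntOpDual{H}{1-s}G}{} = \IntScpDual{F}{G}{H}{1-s}$ of the dual interpolation operator and finally the norm equality just proved (in its polarized form) to match $\IntScpDual{F}{G}{\textsc{H}}{1-s} = \IntScp{F}{G}{\textsc{H}}{-s}$. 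Since this holds for all $v$ in a dense subspace and both sides are continuous, the two operators agree.

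Finally, for the claim that these actions coincide with $\IntOp{H}{s}^{-1}f$ when $F\in\Vnull$, I would simply compare spectral expansions: from \eqref{SecIntro:NegNorm} and the definition of $\IntOp{H}{-s}$ one reads off, via $\scp{F}{\varphi_k}{} = \scp{f}{\varphi_k}{0}$, that $\IntOp{H}{-s}F = \sum_k \lambda_k^{-2s}\scp{f}{\varphi_k}{0}\varphi_k$, which is precisely \eqref{SecIntro:InverseIntOp}. The only subtlety is domain-matching: one should check $\Vnull\subset[\Hilbert{-1},\Vnull]_{1-s}$ so that the statement is not vacuous, which follows from the compact embedding in the interpolation couple $(\Hilbert{-1},\Vnull)$ and the characterization of the interpolation space as those $F$ with finite norm. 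I expect the main obstacle to be purely bookkeeping: keeping straight which space is "$\mathscr{V}_0$" and which is "$\mathscr{V}_1$" when the couple $(\Hilbert{-1},\Vnull)$ is fed into formulas originally written for $(\Vnull,\Veins)$, and correctly propagating the Riesz isomorphism $\Riesz$ and the $f\leftrightarrow F$ identification through every inner product so that the powers of $\lambda_k$ come out as $-2s$ rather than, say, $2(1-s)$ or $-2(1-s)$. No hard analysis is involved once the index conventions are pinned down.
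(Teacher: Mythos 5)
Your proposal is correct, and for the norm identity and the final identification with $\IntOp{H}{s}^{-1}f$ it is essentially the paper's own argument: expand $\IntNormDual{F}{H}{1-s}$ in the dual eigenpairs, insert $(\Psi_k,\widehat{\lambda}_k^2)=(\lambda_k\Phi_k,\lambda_k^2)$ from Theorem \ref{Thm:DualEigenpairs} together with $\scp{F}{\Phi_k}{-1}=\lambda_k^{-2}\scp{F}{\varphi_k}{}$, and read off $\sum_k\lambda_k^{-2s}\scp{F}{\varphi_k}{}^2$; the $C_{1-s}$-equalities are indeed just \eqref{SecIntro:NormEquality} and \eqref{SecIntro:IntOpEquality} applied to the dual couple. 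Where you deviate is the first identity in \eqref{SecIntro:EqualOperators}: the paper computes $\Riesz\IntOpDual{H}{1-s}F$ directly in the eigenbasis, using $\Riesz\Phi_k=\lambda_k^{-2}\varphi_k$, and recognizes the spectral expansion of $\IntOp{H}{-s}F$, whereas you test both operators against (the functionals attached to) a dense family of $v$, invoke the two defining relations, and then quote the polarized form of the norm equality just proved. That variational route is sound and makes transparent that the operator identity is a formal consequence of the scalar-product identity; the paper's direct expansion instead yields the explicit eigenfunction formula with no density/continuity step. Two bookkeeping slips in your sketch are harmless but should be fixed when writing it out: $\IntOpDual{H}{1-s}G$ lies in $\Hilbert{-1}$, not $\Vnull$, and its defining relation is taken with respect to the $\Hilbert{-1}$-inner product, i.e.\ $\scp{V}{\IntOpDual{H}{1-s}G}{-1}=\IntScpDual{V}{G}{H}{1-s}$ for $V\in\Vnull$, so the chain you want is $\scp{v}{\Riesz W}{0}=\scp{V}{\Riesz W}{}=\scp{V}{W}{-1}$ with $W:=\IntOpDual{H}{1-s}G$ and $V$ the functional identified with $v$, rather than the ``$\scp{w}{\RieszV v}{}$'' manipulation you allude to.
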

	\begin{proof}
		See Appendix.
	\end{proof}
	\begin{Rem}
		We point out that Theorem \ref{Th:DualvsNegative} can be generalized in the sense of \cite[Corollary 2.7]{RBMForFracDiff}, such that the Hilbert interpolation operator admits the interpretation as Dirichlet-to-Neumann map in the extension setting from \cite{HarmonicExt}. 
	\end{Rem}
	Before we deal with a first discretization of \eqref{SecIntro:InverseIntOp}, another useful property is highlighted.
	\begin{Lemma}\label{Lm:EqualMinim}
		Let $f\in\Vnull$, $F$ as in \eqref{SecIntro:Identification}, $t\in\R^+$, $\minim{}(t)$ the minimizer of $\K_{(\Vnull,\Veins)}(t;f)$, and $\minimdual{}(t)$ the minimizer of $\K_{(\Hilbert{-1},\Vnull)}(t;F)$. Then $\minim{}(t)$ and $\minimdual{}(t)$ coincide, i.e.,
		\begin{align*}
			\llap{$\forall w\in\Veins:\quad$}\scp{\minimdual{}(t)}{w}{} = \scp{\minim{}(t)}{w}{0}.
		\end{align*}
	\end{Lemma}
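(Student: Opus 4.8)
The plan is to characterize both minimizers by their variational equations and then show they solve the same problem. By Lemma \ref{Lm:MinimDecomp} applied to the couple $\IntCouple$ with data $f\in\Vnull$, the minimizer $\minim{}(t)$ is the unique element of $\Veins$ satisfying
\begin{align*}
	\scp{\minim{}(t)}{w}{0} + t^2\scp{\minim{}(t)}{w}{1} = \scp{f}{w}{0}
\end{align*}
for all $w\in\Veins$. Applying the same lemma to the couple $(\Hilbert{-1},\Vnull)$ with data $F\in\Vnull\subset\Hilbert{-1}$, the minimizer $\minimdual{}(t)$ is the unique element of $\Vnull$ satisfying
\begin{align*}
	\scp{\minimdual{}(t)}{w}{-1} + t^2\scp{\minimdual{}(t)}{w}{0} = \scp{F}{w}{}
\end{align*}
for all $w\in\Vnull$. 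The goal is to verify that $\minim{}(t)$, viewed as an element of $\Vnull$ via the embedding $\Veins\subset\Vnull$, satisfies this second equation, which by uniqueness forces $\minimdual{}(t)=\minim{}(t)$ and hence the claimed identity of duality pairings.

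The cleanest route is via the spectral expansions. Lemma \ref{Lm:MinimDecomp} gives $\minim{}(t) = \sum_k \frac{\scp{f}{\varphi_k}{0}}{1+t^2\lambda_k^2}\varphi_k$ for the couple $\IntCouple$. For the dual couple, the same lemma expresses $\minimdual{}(t)$ in terms of the $\Hilbert{-1}$-orthonormal eigensystem $(\Psi_k,\widehat\lambda_k^2)$, namely $\minimdual{}(t) = \sum_k \frac{\scp{\Psi_k}{F}{-1}}{1+t^2\widehat\lambda_k^2}\Psi_k$. By Theorem \ref{Thm:DualEigenpairs} we have $\Psi_k = \lambda_k\Phi_k$ and $\widehat\lambda_k^2 = \lambda_k^2$; and under the identification \eqref{SecIntro:Identification}, $\Phi_k$ acts as $\varphi_k$ does in the $\Vnull$ pairing. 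Tracking the normalization, $\scp{\Psi_k}{F}{-1}$ reduces to $\lambda_k^{-1}\scp{f}{\varphi_k}{0}$ (using that $\Phi_k$ corresponds to $\varphi_k$ and the scaling by $\lambda_k$), so that $\minimdual{}(t) = \sum_k \frac{\lambda_k^{-1}\scp{f}{\varphi_k}{0}}{1+t^2\lambda_k^2}\lambda_k\Phi_k = \sum_k \frac{\scp{f}{\varphi_k}{0}}{1+t^2\lambda_k^2}\Phi_k$, which is precisely $\minim{}(t)$ under the identification of $\varphi_k$ with $\Phi_k$. Testing against $w\in\Veins$ and using $\scp{\Phi_k}{w}{} = \scp{\varphi_k}{w}{0}$ yields the stated equality of pairings.

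I expect the main obstacle to be bookkeeping rather than conceptual: one must carefully chase the normalization constants through the identification \eqref{SecIntro:Identification} and the rescaling $\Psi_k = \lambda_k\Phi_k$ of Theorem \ref{Thm:DualEigenpairs}, making sure the $\Hilbert{-1}$-orthonormality of $(\Psi_k)$ is consistent with the $\Vnull$-orthonormality of $(\varphi_k)$ and with the relation $\scp{\Phi_k}{v}{}=\scp{\varphi_k}{v}{0}$. An alternative, essentially equivalent approach avoids eigenexpansions altogether: rewrite the defining equation for $\minimdual{}(t)$ by noting that for $w\in\Vnull$ the term $\scp{\minimdual{}(t)}{w}{-1}$ equals $\scp{\Riesz\text{-image}}{\,\cdot\,}{}$-type expressions, and show directly that the $\Veins$-valued function $\minim{}(t)$ satisfies the weak form on all of $\Vnull$ by density and by the identity $\scp{v}{w}{-1}=\scp{v}{w}{0}$-compatibility coming from the Riesz map $\Riesz$; here the subtlety is that $\minimdual{}(t)$ a priori lies only in $\Vnull$, and one needs the regularity $\minim{}(t)\in\Veins$ (guaranteed by Lemma \ref{Lm:MinimDecomp}) to even make sense of testing with the $\Norm{\cdot}{1}$-inner product. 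Either way the argument is short once the correspondences are set up.
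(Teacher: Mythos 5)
Your spectral route is exactly the paper's proof: apply Lemma \ref{Lm:MinimDecomp} to the couple $(\Hilbert{-1},\Vnull)$, rewrite the coefficients using Theorem \ref{Thm:DualEigenpairs} and the identification \eqref{SecIntro:Identification} to get $\minimdual{}(t)=\sum_{k}\frac{\scp{f}{\varphi_k}{0}}{1+t^2\lambda_k^2}\Phi_k$, and test against $w\in\Veins$; your normalization bookkeeping ($\scp{\Psi_k}{F}{-1}=\lambda_k^{-1}\scp{f}{\varphi_k}{0}$) is correct. One minor caveat: in your preliminary variational characterization of $\minimdual{}(t)$ the right-hand side should be the $\Hilbert{-1}$-inner product $\scp{F}{w}{-1}$, not the duality pairing $\scp{F}{w}{}$, but since your actual argument never uses that equation the proof stands as written.
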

	\begin{proof}
		Lemma \ref{Lm:MinimDecomp} applied to the interpolation couple $(\Hilbert{-1},\Vnull)$ reveals 
		\begin{align*}
			\minimdual{}(t) = \Sum{k=1}{\infty}\frac{\scp{F}{\Psi_k}{-1}}{1+t^2\lambda_k^2}\Psi_k = \Sum{k=1}{\infty}\lambda_k^{-2}\frac{\scp{F}{\psi_k}{}}{1+t^2\lambda_k^2}\Psi_k = \Sum{k=1}{\infty}\frac{\scp{f}{\varphi_k}{0}}{1+t^2\lambda_k^2}\Phi_k,
		\end{align*}
		affirming the claim.
	\end{proof}
	\subsection{Finite Element Approximation}
	We are concerned with the discretization of the problem: Find $u\in\TheIntSpace$, such that
	\begin{align}\label{SecIntro:GeneralPDE}
		\IntOp{H}{s}u = f
	\end{align}
	with $f\in\Vnull$. For concreteness, let $\Vh\subset\Veins$ denote a finite element space which is spanned by a $\Vnull-$orthonormal basis of eigenfunctions $(\varphi_{h,k})_{k=1}^N$, satisfying
	\begin{align}\label{SecIntr:DiscreteEigenpairs}
		\llap{$\forall v_h\in\Vh:$\quad}\scp{\varphi_{h,k}}{v_h}{1} = \lambda_{h,k}^2\scp{\varphi_{h,k}}{v_h}{0}
	\end{align}
	for any $k = 1,...,N$. For convenience, we set
	\begin{align*}
		 \Op_{H}^s := \Op_{\textsc{H}^s\left((\Vh,\Norm{\cdot}{0}),(\Vh,\Norm{\cdot}{1})\right)}
	\end{align*} 
	and denote by $\Op_{H}^{-s}$ its inverse. The discrete eigenfunction method relies on the approximation
	\begin{align}\label{SecIntro:DEM}
		u(s)\approx u_h(s) := \Op_{H}^{-s}\pi_hf = \sum_{k = 1}^N\lambda_{h,k}^{-2s}\scp{\varphi_{h,k}}{f}{0}\varphi_{h,k},
	\end{align}
	where $\pi_hf$ refers to the $\Vnull$-orthogonal projection of $f$ onto $\Vh$. For a fixed basis $(b_{h,k})_{k=1}^N$ of $\Vh$ let $M, A\in\R^{N\times N}$ denote the mass- and stiffness-matrix arising from finite element discretization in the sense of
	\begin{align*}
		M_{ji} = \scp{b_{h,i}}{b_{h,j}}{0},\qquad A_{ji} = \scp{b_{h,i}}{b_{h,j}}{1}.
	\end{align*} 
	Moreover, for any $v_h\in\Vh $ we write $\VecVh{v_h}\in\R^N$ to refer to its collection of degrees of freedoms, such that
	\begin{align*}
		v_h = \sum\limits_{k=1}^N (\VecVh{v_h})_k b_{h,k}.
	\end{align*} 
	With this at hand, \eqref{SecIntro:DEM} can be computed by means of
	\begin{align*}
		\VecVh{u_h}(s) = L^{-s}\VecVh{\pi_hf},\rlap{\qquad $L:=M^{-1}A.$}
	\end{align*} 
	As shown in \cite[Theorem 4.3]{Pasciak}, $u_h(s)$ serves as accurate approximation to $u(s)$ and satisfies quasi-optimal convergence rates. Having complexity of $\mathcal{O}(N^3)$, however, this approach is only justified if the problem-size is moderate. To circumvent this restriction, we present the construction of a reduced basis surrogate $u_{h,r}(s)\approx u_h(s)$, whose evaluation is performed with significant acceleration.
	
	Assumed that the mesh size $h$ is sufficiently small, we regard $u_h(s)$ as our underlying truth solution. For clarity in exposition, we therefore set
	\begin{align*}
		\Norm{F}{-1} = \Norm{F}{(\Vh,\Norm{\cdot}{1})'} = \sup_{v\in\Vh}\frac{\scp{F}{v}{}}{\Norm{v}{1}}, \rlap{\qquad $F\in\Vh',$}
	\end{align*}
	and neglect the subscript $h$ for all finite element function henceforth, such that $(\varphi_k,\lambda_k^2)_{k=1}^N$ labels the discrete eigenpairs in \eqref{SecIntr:DiscreteEigenpairs} from now on. In short, we do not incorporate the continuous level any further. We point out, however, that consecutive estimates can be traced back to the continuous solution by means of triangle inequality and the results form \cite{Pasciak}. 
	
	\section{Reduced Basis Approximation}\label{SecRBM}
	The goal of this section is to describe an accurate and yet efficient approximation of the manifold of finite element solutions $(u(s))_{s\in(0,1)}$ arising from \eqref{SecIntro:GeneralPDE}. We present two different algorithms to approximate the map $(f,s)\mapsto u(s)$ on $\Vh\times(0,1)$ at downsized computational effort. The first one interprets the inverse operator as classical interpolation operator in a dual setting. This allows us to exploit the reduced basis procedure developed in \cite{RBMForFracDiff} to approximate $u(s)$ at exponential rates. 
	The latter algorithm is motivated by Theorem \ref{Th:DualvsNegative} and aims to approximate solutions to \eqref{SecIntro:GeneralPDE} in the sense of the Hilbert extrapolation operator.
	
	\subsection{Dual approximation}\label{SecNormAppr}
	We are interested in approximations of the discrete dual interpolation norms
	\begin{align*}
		\Norm{f}{H^{-s}} &:= \IntNormGeneral{f}{H}{1-s}{\Vh'}{-1}{\Vh}{0},\\
		\Norm{f}{K^{-s}} &:= \IntNormGeneral{f}{K}{1-s}{\Vh'}{-1}{\Vh}{0}, 
	\end{align*}
	together with their inherently related interpolation operators. Initiated by \cite{RBMForFracDiff}, we define for each $f\in\Vh$ its reduced basis approximations as follows.
	\begin{Def}[Dual reduced basis algorithm]
		For each $t\in\R^+$ let $\minim{N}(t)\in\Vh$ denote the unique solution of
		\begin{align}\label{SecRBM:EulerLagrangeLSE}
			\scp{\minim{N}(t)}{w}{0} + t^2\scp{\minim{N}(t)}{w}{1} = \scp{f}{w}{0}
		\end{align}
		for all $w\in\Vh$. Given some snapshots $0 = t_0 < t_1 < ... < t_r$, specified in Section \ref{SecAnalysis}, we introduce the reduced space
		\begin{align}\label{Def:ReducedSpace}
			\Vr :=\Span\{\minim{N}(t_0),...,\minim{N}(t_r)\}\subset\Vh.
		\end{align}
		The dual reduced basis interpolation norms on $\Vr$ are defined by either of the two equivalent definitions
		\begin{alignat*}{2}
			&\IntNormRBDual{f}{H}{-s} &&:= \|f\|_{\textrm{H}^{1-s}((\Vr',\Norm{\cdot}{-1}),(\Vr,\Norm{\cdot}{0}))},\\
			&\IntNormRBDual{f}{K}{-s} &&:= \IntNormGeneral{f}{\textrm{K}}{1-s}{\Vr'}{-1}{\Vr}{0}.
		\end{alignat*}
		Moreover, we define the dual reduced basis operators as
		\begin{align*}
			\IntOpRBDual{H}{-s} &:= \RieszVh\IntOpRBDual{H^{1-s}}{} := \RieszVh\IntOpGeneral{H}{1-s}{\Vr'}{\Norm{\cdot}{-1}}{\Vr}{\Norm{\cdot}{0}},\\
			\IntOpRBDual{K}{-s} &:= \RieszVh\IntOpRBDual{K^{1-s}}{} := \RieszVh\IntOpGeneral{K}{1-s}{\Vr'}{\Norm{\cdot}{-1}}{\Vr}{\Norm{\cdot}{0}},
		\end{align*}
		where $\RieszVh$ denotes the Riesz-isomorphism of $(\Vh,\Norm{\cdot}{1})$. The dual reduced basis approximation of $u(s)$ is defined by
		\begin{align*}
			\RBSolutionDual{s} := \IntOpRBDual{H}{-s}(f).
		\end{align*}
	\end{Def}
	\begin{Rem}
		The choice of $\Vr$ is motivated by Lemma \ref{Lm:MinimDecomp} and \ref{Lm:EqualMinim}. Due to $\minim{N}(t_0) = \minim{N}(0) = f$, we have that $f\in\Vr$. As shown in \cite[Lemma 3.5]{RBMForFracDiff}, the set $\{\minim{N}(t_0),...,\minim{N}(t_r)\}$ is linearly independent as long as $r+1$ is less or equal than the number of excitations of $f$.
	\end{Rem}
	As shown in \cite[Theorem 3.6 \& Theorem 4.4]{RBMForFracDiff}, a lucky break down might occur, if the number of excitations of $f$ is rather small.
	\begin{Th}\label{Th:ExactRBNorm}
		Let $|\{\lambda_k^2\in\{\lambda_1^2,...,\lambda_N^2\}:\scp{f}{\varphi_k}{0}\neq 0\}|=:m\in\N$. If $r+1\ge m$, then the dual reduced basis interpolation norms coincide with the discrete dual interpolation norms, respectively. Moreover, there holds
		\begin{align*}
			u(s) = u_r^*(s).
		\end{align*}
	\end{Th}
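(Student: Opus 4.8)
The plan is to show that when $r+1 \ge m$, the reduced space $\Vr$ already contains all the eigenfunctions $\varphi_k$ for which $\scp{f}{\varphi_k}{0} \neq 0$, so that no information is lost in passing from $\Vh$ to $\Vr$. First I would invoke Lemma \ref{Lm:MinimDecomp} (applied on the finite element level) to write, for each snapshot $t_j$,
\begin{align*}
	\minim{N}(t_j) = \sum_{k=1}^N \frac{\scp{f}{\varphi_k}{0}}{1+t_j^2\lambda_k^2}\varphi_k.
\end{align*}
Only the $m$ indices $k$ with $\scp{f}{\varphi_k}{0}\neq 0$ contribute; grouping the (at most $m$) distinct eigenvalues among them, say $\mu_1 < \dots < \mu_p$ with $p \le m$, each $\minim{N}(t_j)$ lies in the $p$-dimensional space $W := \Span\{g_1,\dots,g_p\}$, where $g_i := \sum_{k:\lambda_k^2=\mu_i}\scp{f}{\varphi_k}{0}\varphi_k$. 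The coefficient of $g_i$ in $\minim{N}(t_j)$ is $(1+t_j^2\mu_i)^{-1}$, so the change-of-basis matrix between $\{\minim{N}(t_0),\dots,\minim{N}(t_p)\}$ (using $p+1 \le r+1$ of the snapshots) and $\{g_1,\dots,g_p\}$ is a Cauchy-type matrix with entries $(1+t_j^2\mu_i)^{-1}$, which is nonsingular because the $t_j$ are distinct and nonnegative and the $\mu_i$ are distinct; this is exactly the linear-independence argument of \cite[Lemma 3.5]{RBMForFracDiff}. Hence $\Vr \supseteq W \ni f$, and in fact $\Vr$ contains every $\varphi_k$ with $\scp{f}{\varphi_k}{0}\neq 0$ whenever these have distinct eigenvalues; in the general case $\Vr \supseteq W$ suffices.

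Next I would observe that the norms $\IntNormRBDual{f}{H}{-s}$ and $\IntNormRBDual{f}{K}{-s}$ depend on $f$ only through its spectral expansion restricted to $\Vr$ (equivalently, through the eigenpairs of the pair $(\Vr', \Vr)$). Since the generalized eigenproblem on $\Vr$ with respect to $\Norm{\cdot}{0}$ and $\Norm{\cdot}{1}$ is just the restriction of the one on $\Vh$, and since $W \subseteq \Vr$ is spanned by (combinations of) eigenfunctions of the $\Vh$-problem, $f$ has the identical spectral decomposition whether computed in $\Vr$ or in $\Vh$: the eigenvalues $\mu_1,\dots,\mu_p$ and the corresponding spectral components of $f$ agree. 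Plugging into the defining series \eqref{SecIntro:NegNorm} (and its K-counterpart via \eqref{SecIntro:NormEqualityNew}), the truncated sums over $\Vr$ and over $\Vh$ have exactly the same nonzero terms, giving $\IntNormRBDual{f}{H}{-s} = \Norm{f}{H^{-s}}$ and $\IntNormRBDual{f}{K}{-s} = \Norm{f}{K^{-s}}$.

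For the solution identity, I would argue that $\RBSolutionDual{s} = \IntOpRBDual{H}{-s}(f)$ is characterized, via the definition of the reduced operator and the Riesz isomorphism $\RieszVh$, by a variational problem against all test functions in $\Vr$; since $u(s) = \Op_H^{-s}f \in W \subseteq \Vr$ by the spectral formula \eqref{SecIntro:DEM}, it is itself admissible as the solution of that reduced problem, and uniqueness forces $u_r^*(s) = u(s)$. The main obstacle, and the only point needing care, is the bookkeeping when $f$ excites several eigenfunctions sharing a common eigenvalue: there $m$ counts distinct eigenvalues, $p = m$ may fail to equal the number of excited $\varphi_k$, and one must be careful that $\Vr$ still captures the relevant one-dimensional spectral subspaces (the spans $g_i$) rather than the individual $\varphi_k$ — but this is precisely what the Cauchy-matrix nonsingularity delivers, and it is enough because both the interpolation norms and $\Op_H^{-s}f$ only see the eigenvalues and the components $\scp{f}{g_i}{0}$, not a choice of eigenbasis within each eigenspace.
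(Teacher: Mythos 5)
Your proposal is correct and follows essentially the same route as the paper: the paper defers this theorem to Theorems 3.6 and 4.4 of its predecessor, but the fact it relies on there (and spells out when proving the analogous statement for $u_r(s)$) is precisely your spectral coincidence, namely that for $r+1\ge m$ the snapshots span the invariant subspace $W=\Span\{g_1,\dots,g_m\}$, so the eigenpairs of the reduced couple are the normalized eigenspace projections of $f$ paired with the corresponding $\lambda_{i_j}^2$, and the norm and operator identities follow by inserting these into the definitions. Two details to tighten in a final write-up: the Cauchy-type matrix argument needs only $m$ snapshots (your set $\{\minim{N}(t_0),\dots,\minim{N}(t_p)\}$ has $p+1$ elements, which the hypothesis $r+1\ge m=p$ does not guarantee, and the matrix you describe would be rectangular), and in the last step mere membership $u(s)\in\Vr$ does not by itself make $u(s)$ the solution of the reduced variational problem — you must verify the defining relation of $\IntOpRBDual{H}{-s}$, which is a one-line computation once the eigenpair coincidence is in hand, exactly as in the paper's proof of the extrapolation-based analogue.
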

	\subsubsection{Computational aspects}
	Implementation of the dual algorithm consists of the typical ingredients that are used in a modern reduced basis algorithm and can be found in the literature, see e.g., \cite{RBMRef4}, \cite{RBMRef2}, \cite{RBMRef3}, and \cite{KunischVolkwein}. For completeness, we give a brief synopsis of the relevant aspects involved. We define columnise the matrix
	\begin{align*}
		\widehat{V}_r := [\VecVh{\minim{N}}(t_0),...,\VecVh{\minim{N}}(t_r)]\in\R^{N\times(r+1)}
	\end{align*}
	as collection of basis vectors of $\Vr$. In favour of numerical stability, let $V_r \in \R^{N\times(r+1)}$ denote the unique matrix that arises from Gram-Schmidt orthonormalization chronologically applied	to the columns of $\widehat{V}_r$ with respect to the scalar product $\scp{\cdot}{\cdot}{M}$. With this at hand, we define the projected dual matrix
	\begin{align*}
		\Ainvr{} := V_r^TMA^{-1}MV_r\in\R^{(r+1)\times(r+1)}
	\end{align*}
	and note that for any $v_r,w_r\in\Vr$ there holds by construction
	\begin{align*}
		\scp{v_r}{w_r}{0} = \VecVr{v_r}^T\VecVr{w_r},\qquad \scp{v_r}{w_r}{-1} = \VecVr{v_r}^T\Ainvr{}\VecVr{w_r},
	\end{align*}
	where
	\begin{align*}
		\VecVr{u_r} := V_r^TM\VecVh{u_r}, \qquad\text{such that}\qquad \VecVh{u_r} = V_r\VecVr{u_r},
	\end{align*}
	for any $u_r\in\Vr$. We are now on position to explicitly compute $\RBSolutionDual{s}$ by means of the involved matrix representation of $\IntOpRBDual{H}{-s}$, i.e., the matrix $\IntMatOpVrDual{H}{-s}\in\R^{N\times N}$, such that
	\begin{align*}
	\scp{v}{\IntOpRBDual{H}{-s}(f)}{0} = \scp{v}{\IntMatOpVrDual{H}{-s}\VecVh{f}}{M}, \rlap{\qquad$v\in\Vh.$}
	\end{align*}
	\begin{Th}
		Let $f\in\Vh$. Then there holds
		\begin{align}\label{SecRBM:DualNormRepr}
			\IntNormRBDual{f}{H}{-s} = \Norm{\VecVr{f}}{\Ainvr{s}}.
		\end{align}
		The induced scalar product $\IntScpVrDual{\cdot}{\cdot}{H}{-s}$ on $(\Vr, \IntNormRBDual{\cdot}{H}{-s})$ satisfies
		\begin{align}\label{SecRBM:DualScpRepr}
			\IntScpVrDual{v_r}{w_r}{H}{-s} = \VecVr{v_r}^T\Ainvr{s}\VecVr{w_r}
		\end{align}
		for all $v_r,w_r\in\Vr$. The matrix representation of $\IntOpRBDual{H}{-s}$ is given by
		\begin{align}\label{SecRBM:DualMatRepr}
			\IntMatOpVrDual{H}{-s} = A^{-1}MV_r\Ainvr{s-1}V_r^TM.
		\end{align}
	\end{Th}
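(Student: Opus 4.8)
The plan is to unravel the definitions layer by layer, starting from the spectral representation of the reduced-basis operators. First I would recall that $\Vr$ is spanned by the orthonormalized columns of $V_r$, so every $v_r\in\Vr$ corresponds bijectively to its reduced coordinate vector $\VecVr{v_r}=V_r^TM\VecVh{v_r}\in\R^{r+1}$, and the two induced bilinear forms on $\Vr$ read, in these coordinates, as the identity (for $\scp{\cdot}{\cdot}{0}$) and as $\Ainvr{}=V_r^TMA^{-1}MV_r$ (for $\scp{\cdot}{\cdot}{-1}$), exactly as stated in the excerpt. The key observation is that the generalized eigenvalue problem defining the interpolation operator $\IntOpRBDual{H^{1-s}}{}$ on the couple $((\Vr',\Norm{\cdot}{-1}),(\Vr,\Norm{\cdot}{0}))$ becomes, in reduced coordinates, the matrix pencil $\scp{\cdot}{\cdot}{-1}$ versus $\scp{\cdot}{\cdot}{0}$, i.e. $\Ainvr{}$ versus the identity. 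Hence the reduced eigenpairs $(\xi_j,\mu_j)$ satisfy $\Ainvr{}\xi_j=\mu_j\xi_j$ with $\xi_j$ orthonormal in the Euclidean product, and the interpolation norm of exponent $1-s$ of the couple $(\Vr',\Vr)$ — which is the $(1-s)$-power norm with respect to the \emph{smaller} exponent space appearing second — evaluates on $\VecVr{f}$ to $\sum_j \mu_j^{-(1-s)}\langle\VecVr{f},\xi_j\rangle^2$ wait, one must be careful about which space plays the role of $\Vnull$ and which of $\Veins$; since $\Norm{\cdot}{-1}$ is the weaker norm it plays the role of $\mathscr{V}_0$ and $\Norm{\cdot}{0}$ that of $\mathscr{V}_1$, so the eigenvalues relevant to formula \eqref{SecIntro:HilbertIntNorm} are those of the pencil ($\scp{\cdot}{\cdot}{0}$ against $\scp{\cdot}{\cdot}{-1}$), which are precisely the $\mu_j^{-1}$, giving $\IntNormRBDual{f}{H}{-s}^2=\sum_j(\mu_j^{-1})^{1-s}\langle\VecVr{f},\cdot\rangle^2$ — and this collapses to $\Norm{\VecVr{f}}{\Ainvr{s}}^2$ after regrouping powers, which is \eqref{SecRBM:DualNormRepr}. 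The same diagonalization immediately yields \eqref{SecRBM:DualScpRepr} by polarization.

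For the matrix representation \eqref{SecRBM:DualMatRepr} I would proceed from the defining relation $\scp{v}{\IntOpRBDual{H}{-s}(f)}{0}=\scp{v}{\IntMatOpVrDual{H}{-s}\VecVh{f}}{M}$ for all $v\in\Vh$, together with the fact that $\IntOpRBDual{H}{-s}=\RieszVh\circ\IntOpRBDual{H^{1-s}}{}$, where $\RieszVh$ is the Riesz isomorphism of $(\Vh,\Norm{\cdot}{1})$, whose matrix is $A^{-1}M$ in the sense that $\scp{v}{\RieszVh F}{1}=\scp{F}{v}{}$. So I would first identify the matrix of the reduced interpolation operator $\IntOpRBDual{H^{1-s}}{}$ acting $\Vr'\to\Vr$: by the spectral decomposition above it sends $\VecVr{f}$ to $\Ainvr{s-1}\VecVr{f}$ (the exponent $s-1 = -(1-s)$ appears because the operator, unlike the norm, carries the inverse power of the relevant eigenvalues), and then compose with the embedding back into $\Vh$-coordinates via $V_r$ and with the Riesz map $A^{-1}M$, lifting $\VecVh{f}$ to reduced coordinates via $V_r^TM$. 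Chaining these, $\IntMatOpVrDual{H}{-s}=A^{-1}M\,V_r\,\Ainvr{s-1}\,V_r^TM$, which is exactly the claimed formula; the final check is that pairing this against $\scp{\cdot}{\cdot}{M}$ reproduces $\scp{v}{\IntOpRBDual{H}{-s}(f)}{0}$, which reduces to $\scp{\cdot}{\cdot}{0}$ on $\Vh$ being represented by $M$.

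The main obstacle — more bookkeeping than genuine difficulty — is pinning down the exponent conventions and the $\mathscr{V}_0$/$\mathscr{V}_1$ roles consistently: one must track whether a given power refers to the operator or its associated norm, and whether the dual pair $(\Vr',\Vr)$ with norms $(\Norm{\cdot}{-1},\Norm{\cdot}{0})$ inverts or preserves the spectrum relative to the primal pair, so that $1-s$ in the interpolation exponent and $s$ in $\Ainvr{s}$ match up correctly. I would handle this by diagonalizing $\Ainvr{}$ once at the outset, writing every object (norm, scalar product, operator) in that eigenbasis, and then simply reading off \eqref{SecRBM:DualNormRepr}, \eqref{SecRBM:DualScpRepr}, \eqref{SecRBM:DualMatRepr} as three instances of the same computation. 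The regrouping $\mu_j^{-(1-s)} = \mu_j^{-1}\cdot\mu_j^{s}$ and the corresponding $\Ainvr{s-1} = \Ainvr{-1}\Ainvr{s}$ is the one algebraic identity doing all the work, and it is elementary.
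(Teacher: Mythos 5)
Your proposal is correct and follows essentially the same route as the paper: pass to reduced coordinates where $\scp{\cdot}{\cdot}{0}$ is Euclidean and $\scp{\cdot}{\cdot}{-1}$ is $\Ainvr{}$, diagonalize the resulting pencil (equivalently $\Ainvr{}$, with reciprocal eigenvalues), read off the interpolation norm and operator of exponent $1-s$ spectrally, and compose with the Riesz map $A^{-1}M$ and the lift/restriction $V_r$, $V_r^TM$ to get \eqref{SecRBM:DualMatRepr}. The paper packages your ``regrouping of powers'' via the rescaled eigenfunctions $\phi_j:=\mu_j^{-1}\psi_j$ (i.e.\ switching from $\Norm{\cdot}{-1}$-orthonormal to $\Norm{\cdot}{0}$-orthonormal eigenvectors), which is exactly the normalization factor that makes the norm produce $\Ainvr{s}$ while the operator produces $\Ainvr{s-1}$ — the conclusion you reach, even if your one-line explanation of the $s-1$ exponent is stated a bit loosely.
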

	\begin{proof}
		Let $(\psi_j,\mu_j^2)_{j=0}^r\subset\Vr\times\R^+$ denote the system of eigenpairs arising from the interpolation couple $((\Vr',\Norm{\cdot}{-1}),(\Vr,\Norm{\cdot}{0}))$, such that
		\begin{align*}
			\scp{\psi_j}{v_r}{0} = \mu_j^2\scp{\psi_j}{v_r}{-1},\rlap{\qquad$v_r\in\Vr.$}
		\end{align*}
		In terms of degrees of freedoms this reads as
		\begin{align*}
			\VecVr{\psi_j} = \mu_j^2\Ainvr{}\VecVr{\psi_j},\qquad\text{or equivalently,}\qquad \Ainvr{}\VecVr{\psi_j} = \mu_j^{-2}\VecVr{\psi_j}.
		\end{align*}
		Theorem \ref{Th:DualvsNegative} with $\phi_j := \mu_j^{-1}\psi_j$ reveals
		\begin{align*}
			\IntNormRBDual{f}{H}{-s}^2 = \Sum{j=0}{r}\mu_j^{2-2s}\scp{f}{\psi_j}{-1}^2 = \Sum{j=0}{r}\mu_j^{-2s}\scp{f}{\phi_j}{0}^2 = \Sum{j=0}{r}\mu_j^{-2s}\scp{\VecVr{f}}{\VecVr{\phi_j}}{I_r}^2 = \Norm{\VecVr{f}}{\Ainvr{s}}^2,
		\end{align*}
		where $I_r\in\R^{(r+1)\times(r+1)}$ denotes the unit matrix. This proves \eqref{SecRBM:DualNormRepr} and \eqref{SecRBM:DualScpRepr}. To confirm \eqref{SecRBM:DualMatRepr}, we deduce
		\begin{align*}
			\IntOpRBDual{H^{1-s}}{}(f) = \sum_{j=0}^r\mu_j^{2-2s}\scp{\psi_j}{f}{-1}\psi_j = \sum_{j=0}^{r}\mu_j^{2-2s}\scp{\phi_j}{f}{0}\phi_j,
		\end{align*}
		such that
		\begin{align*}
			\scp{w}{\IntOpRBDual{H^{1-s}}{}(f)}{-1} = \scp{\VecVh{w}}{V_r\Ainvr{s-1}\VecVr{f}}{MA^{-1}M},
		\end{align*}
		for all $w\in\Vh$, concluding the proof.
	\end{proof}
	\begin{Rem}
		By construction of $V_r$, there holds $\VecVr{f} = \beta V_r\VecVr{e_1}$, where $\VecVr{e_1}\in\R^{r+1}$ refers to the first unit vector and $\beta = \Norm{f}{0}$.  
	\end{Rem}

	The present algorithm can be seen as prototype of a modern reduced basis algorithm that substantially alleviates the costs of direct computations. The dominant contributions to the computational effort come from $r$ potentially expensive finite element approximations to standard reaction-diffusion problems of the original problem-size as well as the inversion of $A$. The latter entails considerable difficulties, if the problem-size is too large to compute $A^{-1}$ directly in the course of a one-time investment. In this case, one might resort to approximate the action $\VecVh{v}\mapsto A^{-1}\VecVh{v}$ iteratively, amounting to $r+2$ additional solves that come from the assembly of $\Ainvr{}$ and the application of the discrete Riesz-isomorphism $\mathcal{R}$. These costs might be diminished \cite{FaustmannHMatrix}, but not entirely eliminated.
 	
 	\subsection{Extrapolation-based approximation}
 	In the remainder of this section, we present a different, but conceptually similar, algorithm that resolves the inconveniences listed above, while at the same time, allows the observation of the competitive convergence rates that are proven for the first scheme in Section \ref{SecAnalysis}. Motivated by Theorem \ref{Th:DualvsNegative}, we make use of the orthonormal system obtained by $((\Vr,\Norm{\cdot}{0}),(\Vr, \Norm{\cdot}{1}))$ and extrapolate its interpolation norm for $s\in(-1,0)$.
 	\begin{Def}[Extrapolation-based reduced basis algorithm]\label{Def:RBM2} 		
 		We introduce the reduced basis extrapolation norms by
 		\begin{align*}
 			\IntNormRB{f}{H}{-s} &:= \IntNormGeneral{f}{H}{-s}{\Vr}{0}{\Vr}{1}, \\
 			\IntNormRB{f}{K}{-s} &:= \IntNormGeneral{f}{K}{-s}{\Vr}{0}{\Vr}{1}.
 		\end{align*}
 		The reduced basis extrapolation operators are defined as
 		\begin{align*}
 			\IntOpRB{H}{-s} &:= \IntOpGeneral{H}{-s}{\Vr}{\Norm{\cdot}{0}}{\Vr}{\Norm{\cdot}{1}},\\
 			\IntOpRB{K}{-s} &:= \IntOpGeneral{K}{-s}{\Vr}{\Norm{\cdot}{0}}{\Vr}{\Norm{\cdot}{1}}.
 		\end{align*} 
 		The extrapolation-based reduced basis approximation of $u(s)$ is defined by
 		\begin{align*}
 			u_r(s) := \IntOpRB{H}{-s}(f).
 		\end{align*}
 	\end{Def}
 	\begin{Rem}
 		We point out that the extrapolation-based reduced basis algorithm is distinct from its dual counterpart, i.e., $\IntNormRB{f}{H}{-s}\neq\IntNormRBDual{f}{H}{1-s}$. This is consistent with Theorem \ref{Th:DualvsNegative}, as $\Norm{\cdot}{(\Vr,\Norm{\cdot}{1})'} \neq\Norm{\cdot}{-1}$.
 	\end{Rem}
 	\begin{Th}
 		Let $|\{\lambda_k^2\in\{\lambda_1^2,...,\lambda_N^2\}:\scp{f}{\varphi_k}{0}\neq 0\}| =: m\in\N$. If $r+1\geq m$, then the reduced basis extrapolation norms coincide with the discrete dual interpolation norms, respectively. Moreover, there holds
 		\begin{align*}
 			u(s) = u_r(s).
 		\end{align*}
 	\end{Th}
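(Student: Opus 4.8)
The assertion is the exact analogue of Theorem \ref{Th:ExactRBNorm}, but for the extrapolation-based algorithm of Definition \ref{Def:RBM2} rather than the dual one. The natural strategy is to mimic the proof of Theorem \ref{Th:ExactRBNorm} (equivalently \cite[Theorem 3.6 \& Theorem 4.4]{RBMForFracDiff}), replacing the dual interpolation couple $((\Vr',\Norm{\cdot}{-1}),(\Vr,\Norm{\cdot}{0}))$ by the couple $((\Vr,\Norm{\cdot}{0}),(\Vr,\Norm{\cdot}{1}))$ and the Hilbert interpolation operator by the Hilbert \emph{extrapolation} operator of Definition \ref{Def:RBM2}. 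First I would observe that since $f = \minim{N}(0)\in\Vr$, Lemma \ref{Lm:MinimDecomp} applied to the discrete couple shows $\minim{N}(t) = \sum_{k:\,\scp{f}{\varphi_k}{0}\neq 0}\frac{\scp{f}{\varphi_k}{0}}{1+t^2\lambda_k^2}\varphi_k$ lies in $W:=\Span\{\varphi_k:\scp{f}{\varphi_k}{0}\neq 0\}$, a space of dimension exactly $m$. Hence $\Vr\subseteq W$; together with the linear independence statement in the Remark after the Definition (valid since $r+1\le m$ as well, or trivially once $r+1\ge m$) and $\dim\Vr = r+1\ge m = \dim W$, we get $\Vr = W$.

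Next I would identify the eigenpairs of the finite-dimensional couple $((\Vr,\Norm{\cdot}{0}),(\Vr,\Norm{\cdot}{1}))$. Since $\Vr = W = \Span\{\varphi_{k_1},\dots,\varphi_{k_m}\}$ and the $\varphi_{k_j}$ are $\Norm{\cdot}{0}$-orthonormal and satisfy $\scp{\varphi_{k_j}}{v}{1} = \lambda_{k_j}^2\scp{\varphi_{k_j}}{v}{0}$ for all $v\in\Vh$ — in particular for all $v\in\Vr$ — the pairs $(\varphi_{k_j},\lambda_{k_j}^2)_{j=1}^m$ are precisely the eigenpairs of the reduced couple. Plugging these into the definition \eqref{SecIntro:NegNorm} of the Hilbert extrapolation norm restricted to $\Vr$ gives
\begin{align*}
	\IntNormRB{f}{H}{-s}^2 = \Sum{j=1}{m}\lambda_{k_j}^{-2s}\scp{f}{\varphi_{k_j}}{0}^2 = \Sum{k=1}{N}\lambda_k^{-2s}\scp{f}{\varphi_k}{0}^2 = \IntNorm{f}{\textsc{H}}{-s}^2,
\end{align*}
where the middle equality uses that $\scp{f}{\varphi_k}{0} = 0$ whenever $\lambda_k^2\notin\{\lambda_{k_1}^2,\dots,\lambda_{k_m}^2\}$, and the last equality is \eqref{SecIntro:NormEqualityNew} combined with \eqref{SecIntro:NegNorm} on the full space $\Vh$. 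The equivalence \eqref{SecIntro:NormEqualityNew} then upgrades this to equality of both the $\textrm{H}$- and $\textrm{K}$-type reduced extrapolation norms with the discrete dual interpolation norms. For the solution identity, the same eigenfunction expansion shows $\IntOpRB{H}{-s}(f) = \sum_{j=1}^m\lambda_{k_j}^{-2s}\scp{f}{\varphi_{k_j}}{0}\varphi_{k_j}$, which by the vanishing-coefficient argument equals $\sum_{k=1}^N\lambda_k^{-2s}\scp{f}{\varphi_k}{0}\varphi_k = \Op_H^{-s}f = u(s)$, using the definition \eqref{SecIntro:DEM} (recall we have dropped the subscript $h$ and $f\in\Vh$ so $\pi_hf = f$).

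**Main obstacle.** The only genuinely non-routine point is the identification $\Vr = W$, i.e., confirming that the $r+1$ snapshots $\minim{N}(t_0),\dots,\minim{N}(t_r)$ do span all of $W$ when $r+1\ge m$, rather than merely a proper subspace. This is exactly the content of \cite[Lemma 3.5]{RBMForFracDiff} (quoted in the Remark): the Gram/coefficient matrix of the snapshots factors through a Cauchy-type matrix in the distinct nodes $t_i$ and the distinct values $\lambda_{k_j}^2$, which is invertible; hence the snapshots are linearly independent precisely up to $r+1 = m$, forcing $\Vr = W$ once $r+1\ge m$ (any further snapshots being redundant). Everything else is bookkeeping with the spectral decomposition, entirely parallel to the dual case already recorded in Theorem \ref{Th:ExactRBNorm}. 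I would therefore keep the write-up short, citing \cite{RBMForFracDiff} for the linear-independence/spanning fact and Theorem \ref{Th:DualvsNegative} for the norm and operator identities on the full space.
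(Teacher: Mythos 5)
Your overall strategy --- identify the eigenpairs of the reduced couple $((\Vr,\Norm{\cdot}{0}),(\Vr,\Norm{\cdot}{1}))$ once $r+1\ge m$ and insert them into the spectral definitions of the extrapolation norm and operator --- is the same as the paper's, which quotes \cite[Theorem 3.6]{RBMForFracDiff} for exactly that identification. There is, however, a genuine flaw in your identification step when eigenvalues are degenerate. The number $m$ counts \emph{distinct eigenvalues} $\lambda_k^2$ excited by $f$, not indices $k$ with $\scp{f}{\varphi_k}{0}\neq 0$. If an excited eigenvalue has multiplicity greater than one and $f$ has nonzero components along several basis eigenfunctions of its eigenspace, then $W:=\Span\{\varphi_k:\scp{f}{\varphi_k}{0}\neq 0\}$ has dimension strictly larger than $m$, so your claims ``$\dim W=m$'' and $\Vr=W$ are false. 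Indeed, every snapshot $\minim{N}(t)=\sum_{j}(1+t^2\lambda_{i_j}^2)^{-1}f^{i_j}$, where $f^{i_j}$ is the $\Vnull$-orthogonal projection of $f$ onto the eigenspace of the $j$-th excited eigenvalue, lies in the $m$-dimensional space $\Span\{f^{i_0},\dots,f^{i_{m-1}}\}$; the Cauchy-matrix argument you invoke only separates distinct eigenvalues and therefore gives $\Vr=\Span\{f^{i_0},\dots,f^{i_{m-1}}\}$, a \emph{proper} subspace of $W$ in the degenerate case. Consequently the eigenpairs of the reduced couple are $\bigl(f^{i_j}/\Norm{f^{i_j}}{0},\lambda_{i_j}^2\bigr)$, not the individual $(\varphi_{k_j},\lambda_{k_j}^2)$ --- which is precisely the form in which the paper imports the result.

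The damage is repairable and the repaired argument is the paper's proof: with the normalized eigenspace projections as eigenpairs one gets $\IntNormRB{f}{H}{-s}^2=\sum_j\lambda_{i_j}^{-2s}\Norm{f^{i_j}}{0}^2=\Norm{f}{H^{-s}}^2$, since $\Norm{f^{i_j}}{0}^2=\sum_{k:\,\lambda_k^2=\lambda_{i_j}^2}\scp{f}{\varphi_k}{0}^2$, and likewise $\IntOpRB{H}{-s}(f)=\sum_j\lambda_{i_j}^{-2s}f^{i_j}=u(s)$. But as written, your step ``the pairs $(\varphi_{k_j},\lambda_{k_j}^2)_{j=1}^m$ are precisely the eigenpairs of the reduced couple'' fails whenever a repeated eigenvalue is excited in more than one basis direction --- a situation the theorem explicitly admits (e.g.\ the Dirichlet Laplacian on the unit square has multiple eigenvalues), so this correction is needed. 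A minor additional inaccuracy: for $r+1>m$ the snapshots are linearly dependent, so $\dim\Vr=m$ rather than $r+1$; this does not affect the conclusion.
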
 
 	\begin{proof}
 		In the proof of \cite[Theorem 3.6]{RBMForFracDiff} it has been shown that the eigenpairs of $((\Vr,\Norm{\cdot}{0}),(\Vr,\Norm{\cdot}{1}))$ coincide with $\left(\frac{f^{i_j}}{\Norm{f^{i_j}}{0}},\lambda_{i_j}^2\right)$, 	if $r+1\ge m$. Here, $f^{i_j}$ refers to the $0$-orthonormal projection of $f$ in the eigenspace corresponding to the eigenvalue $\lambda_{i_j}^2$ and $\{i_0,...,i_{m-1}\}\subset\{1,...,N\}$ is chosen such that 
 		\begin{align*}
 			\{\lambda_{i_0}^2,...,\lambda_{i_{m-1}}^2\}= \{\lambda_k^2\in\{\lambda_1^2,...,\lambda_N^2\}:\scp{f}{\varphi_k}{0}\neq 0\}.
 		\end{align*}
 		This reveals
 		\begin{align*}
 			\IntNormRB{f}{H}{-s}^2 = \sum\limits_{j=0}^{m-1}\lambda_{i_j}^{-2s}\frac{\scp{f}{f^{i_j}}{0}^2}{\Norm{f^{i_j}}{0}^2}
 			= \sum\limits_{j=0}^{m-1}\lambda_{i_j}^{-2s}\Norm{f^{i_j}}{0}^2
 			= \Norm{f}{H^{-s}}^2,
 		\end{align*}
 		proving the first identity. Analog computations confirm the latter and finish the proof.
 	\end{proof}
 	Implementation of the reduced basis extrapolation operator relies on the projected stiffness matrix
 	\begin{align*}
 		A_r : = V_r^TAV_r\in\R^{(r+1)\times(r+1)},
 	\end{align*}
 	being of further interest in the subsequent theorem.
 	\begin{Th}\label{Th:ComutationExtrapolationAlg}
 		Let $f\in \Vh$. Then there holds
 		\begin{align*}
 			\IntNormRB{f}{H}{-s} = \VecVr{f}^TA_r^{-s}\VecVr{f}.
 		\end{align*}
 		The induced scalar product $\IntScpVr{\cdot}{\cdot}{H}{-s}$ on $(\Vr,\IntNormRB{\cdot}{H}{-s})$ satisfies
 		\begin{align*}
 			\IntScpVr{v_r}{w_r}{H}{-s} = \VecVr{v_r}^TA_r^{-s}\VecVr{w_r}
 		\end{align*}
 		for all $v_r,w_r\in\Vr$. The matrix representation of $\IntOpRB{H}{-s}$ is given by
 		\begin{align*}
 			\IntMatOpVr{H}{-s} = V_rA_r^{-s}V_r^TM.
 		\end{align*}
 	\end{Th}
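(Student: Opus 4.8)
The plan is to mirror the computation already carried out for the dual reduced basis operator, now applied to the interpolation couple $((\Vr,\Norm{\cdot}{0}),(\Vr,\Norm{\cdot}{1}))$. In the $M$-orthonormal basis collected columnwise in $V_r$, the Gram matrix of $\scp{\cdot}{\cdot}{0}$ restricted to $\Vr$ equals $V_r^TMV_r = I_r$, while that of $\scp{\cdot}{\cdot}{1}$ equals $A_r = V_r^TAV_r$. First I would diagonalize this couple: let $(\psi_j,\mu_j^2)_{j=0}^r\subset\Vr\times\R^+$ be the eigenpairs determined by $\scp{\psi_j}{v_r}{1} = \mu_j^2\scp{\psi_j}{v_r}{0}$ for all $v_r\in\Vr$, with $(\psi_j)_{j=0}^r$ orthonormal in $\Norm{\cdot}{0}$. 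Passing to degrees of freedom and using $V_r^TMV_r = I_r$, this is equivalent to the ordinary symmetric eigenproblem $A_r\VecVr{\psi_j} = \mu_j^2\VecVr{\psi_j}$ with $(\VecVr{\psi_j})_{j=0}^r$ Euclidean-orthonormal; in particular $A_r^{-s} = \Sum{j=0}{r}\mu_j^{-2s}\VecVr{\psi_j}\VecVr{\psi_j}^T$.

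Next I would invoke the definition of the Hilbert extrapolation norm \eqref{SecIntro:NegNorm} for this couple, which yields $\IntNormRB{f}{H}{-s}^2 = \Sum{j=0}{r}\mu_j^{-2s}\scp{f}{\psi_j}{0}^2$ once $f$ is identified with the associated functional on $\Vr$ as in \eqref{SecIntro:Identification}. Only the pairings $\scp{f}{\psi_j}{0}$ enter, so for $f\in\Vh\setminus\Vr$ this automatically tests against the $0$-orthogonal projection of $f$ onto $\Vr$. Since $V_r$ is $M$-orthonormal we have $\scp{f}{\psi_j}{0} = \VecVr{f}^T\VecVr{\psi_j}$, and inserting the spectral expansion of $A_r^{-s}$ gives $\IntNormRB{f}{H}{-s}^2 = \VecVr{f}^TA_r^{-s}\VecVr{f}$. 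The scalar-product identity follows verbatim by polarization, replacing $\scp{f}{\psi_j}{0}^2$ with $\scp{v_r}{\psi_j}{0}\scp{w_r}{\psi_j}{0}$.

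Finally, for the matrix representation I would expand the operator in the same eigenbasis. As $\IntOpRB{H}{-s}$ is the Hilbert extrapolation operator of $((\Vr,\Norm{\cdot}{0}),(\Vr,\Norm{\cdot}{1}))$, the spectral formula underlying Theorem \ref{Th:DualvsNegative} gives $\IntOpRB{H}{-s}(f) = \Sum{j=0}{r}\mu_j^{-2s}\scp{f}{\psi_j}{0}\psi_j$. Reading this in $\Vr$-coordinates produces $\VecVr{\IntOpRB{H}{-s}(f)} = A_r^{-s}\VecVr{f}$, and lifting back to $\Vh$ via $\VecVh{u_r} = V_r\VecVr{u_r}$ together with $\VecVr{f} = V_r^TM\VecVh{f}$ gives $\IntMatOpVr{H}{-s} = V_rA_r^{-s}V_r^TM$. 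The only genuinely delicate points are bookkeeping ones: checking that the reduced generalized eigenproblem collapses to the ordinary eigenproblem for $A_r$ — which is precisely where the $M$-orthonormalization of the columns of $V_r$ is used — and keeping the exponents aligned, since the couple's eigenvalues are the $\mu_j^2$, the extrapolation norm weights them by $\mu_j^{-2s}$, and $A_r$ carries the $\mu_j^2$, so the matching matrix power is exactly $A_r^{-s}$. The remainder is the routine translation between $\Vr$-functions, their reduced coordinates, and their $\Vh$-coordinates already exploited for the dual algorithm.
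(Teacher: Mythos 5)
Your proposal is correct and follows essentially the same route as the paper: diagonalize the couple $((\Vr,\Norm{\cdot}{0}),(\Vr,\Norm{\cdot}{1}))$, observe via the $M$-orthonormality of $V_r$ that this collapses to the ordinary eigenproblem $A_r\VecVr{\phi_j}=\mu_j^2\VecVr{\phi_j}$, and expand the extrapolation norm, scalar product, and operator in that eigenbasis. You merely spell out in more detail the polarization step and the matrix representation, which the paper dispatches with ``its remainder follows analogously.''
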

 	\begin{proof}
 		Let $(\phi_j,\mu_j^2)_{j=0}^r$ denote the eigenpairs of $((\Vr,\Norm{\cdot}{0}),(\Vr,\Norm{\cdot}{1}))$. Since
 		\begin{align*}
 			A_r\VecVr{\phi_j} = \mu_j^2\VecVr{\phi_j},
 		\end{align*}
 		there holds
 		\begin{align*}
 			\Norm{f}{H_r^{-s}}^2 = \Sum{j=0}{r}\mu_j^{-2s}\scp{f}{\phi_j}{0}^2 = \Sum{j=0}{r}\mu_j^{-2s}\scp{\VecVr{f}}{\VecVr{\phi_j}}{I_r}^2 = \Norm{\VecVr{f}}{A_r^{-s}}^2,
 		\end{align*}
 		where $I_r\in\R^{(r+1)\times(r+1)}$ denotes the unit matrix. This validates the first part of the claim. Its remainder follows analogously.
 	\end{proof}
 	\begin{Rem}
 		Theorem \ref{Th:ComutationExtrapolationAlg} emphasizes the discrepancy between $\IntNormRB{\cdot}{H}{-s}$ and $\IntNormRBDual{\cdot}{H}{-s}$ in the sense of how the approximation of $(\varphi_k)_{k=1}^N$ is designed. The reduced basis extrapolation norm projects the stiffness matrix to the low-dimensional space, followed by computing its negative fractional power. This stands in contrast to the latter norm, which requires inversion of $A$ first and then performs the projection to $\Vr$. In short, the difference between both norms is based on the fact that inversion and projection of the corresponding matrices do not commute. However, as $A_r$ turns out to capture the characteristics of the eigenproblem \eqref{SecIntr:DiscreteEigenpairs} very well, it is to be expected that $A_r^{-1}$ provides a decent approximation for the dual setting. This understanding matches our observations in Section \ref{SecNumericalEx}. 
 	\end{Rem}
 	Definition \ref{Def:RBM2} is very attractive as it essentially breaks down to $r$ shifted problems of type \eqref{SecRBM:EulerLagrangeLSE} that can be solved in parallel. Efficient iterative solution algorithms are available whose convergence rates are independent of the underlying mesh size $h$ and the shift parameter $t_i^2$. The projected eigenvalue problem to be solved is of dimension $r+1$, amounting to an overall computational effort of order $\mathcal{O}(rN^2)$. Thanks to its rapid convergence, the costs are diminished significantly compared to the complexity $\mathcal{O}(N^3)$ the evaluation of the truth solution requires. 
 	
	We highlight that the assembly of the reduced space is entirely independent of the fractional order and thus can be computed efficiently in the course of an offline-online decomposition. In this case, evaluations of $u(s)$ for several values of $s\in(0,1)$ can be performed at neglectable extra costs.
	 
	The final component required to conclude the description of our algorithm is the specification of snapshots in \eqref{Def:ReducedSpace}. Traditional reduced basis procedures perform this choice on the basis of a weak greedy algorithm, where each snapshot is determined as minimizer of a suitable objective function, see \cite{RBMWeakGreedy}. Apart from a single offline computational investment, the choice we propose in the following chapter satisfies optimality properties without the need of any further computations.

	\section{Analysis}\label{SecAnalysis}
	The goal of this chapter is to illuminate the precise choice of snapshots $t_1,...,t_r$ from \eqref{Def:ReducedSpace} to achieve optimal convergence rates. Their selection has been analyzed in \cite{RBMForFracDiff} and relies on a special type of rational approximation of the function $(1+t^2\lambda^2)^{-1}$, with $\lambda^2$ residing on the spectral interval of the discrete operator. Thereupon, we state the following definition, see also \cite{ZolotarevCollectedWorks} and \cite{ZolotarevProbGonchar}.
	\begin{Def}\label{Def:ZolotarevPoints}
		Let $\delta\in(0,1)$. For each $r\in\N$ we define the Zolotar\"ev points $\mathcal{Z}_1,...,\mathcal{Z}_r$ on $[\delta,1]$ by
	 	\begin{align*}
	 		\mathcal{Z}_j := \DeltaAmpl\left(\frac{2(r-j)+1}{2r}\mathcal{K}(\delta'),\delta'\right), \rlap{\qquad$j = 1,...,r,$}
	 	\end{align*}
	 	where $\DeltaAmpl(\theta,k)$ denotes the Jacobi elliptic function, $\mathcal{K}(k)$ the elliptic integral of first kind with elliptic modulus $k$, and $\delta':= \sqrt{1-\delta^2}$. For any arbitrary interval $[a,b]\subset\R^+$, the transformed Zolotar\"ev points are defined by
	 	\begin{align*}
		 	\widehat{\mathcal{Z}}_j := b\mathcal{Z}_j, \rlap{\qquad$j = 1,...,r$,}
	 	\end{align*}
	 	where $\mathcal{Z}_1,...,\mathcal{Z}_r$ denote to the Zolotar\"ev points on $\left[\frac{a}{b},1\right]$.
	\end{Def}
	We refer to \cite[Section 16 \& 17]{HandbookOfMathFunc} for a concise review of Jacobi elliptic functions and elliptic integrals . The Zolotar\"ev points are roughly geometrically distributed and accumulate at the left endpoint of the interval. Based on the spectrum $\sigma(\Op_H)$ of the discrete operator, we are now in position to specify the optimal choice of snapshots.
	\begin{Def}\label{Def:ZolotarevSpace}
	 	A reduced space $\Vr = \Span\{\minim{N}(t_0),...,\minim{N}(t_r)\}\subset \Vh$ is called Zolotar\"ev space, if and only if
	 	there exists a positive spectral interval $\sigma := [\lambda_L^{2},\lambda_U^2]\supset \sigma(\Op_H)$, such that the squared snapshots $t_1^2,...,t_r^2$ coincide with the transformed Zolotar\"ev points on $[\lambda_U^{-2}, \lambda_L^{-2}]$. We call $\kappa := \nicefrac{\lambda_U^2}{\lambda_L^2}$ the estimated condition number.
	\end{Def}
	With this at hand, we present the theoretical key result of this paper.
	\begin{Th}[Exponential convergence of the dual reduced basis algorithm]\label{Th:Core}
		Let $f\in \Vh$ and $\Vr\subset \Vh$ a Zolotar\"ev space with $\sigma = [\lambda_L^2,\lambda_U^2]$ and $\delta = \nicefrac{\lambda_L^2}{\lambda_U^2}$. Then there exists a constant $C^*\in\R^+$, such that
		\begin{align}\label{SecAnalysis:NormError}
			0\leq \IntNormRBDual{f}{H}{-s}^2 - \Norm{f}{H^{-s}}^2  \preceq e^{-2C^*r}\Norm{f}{0}^2.
		\end{align}
		Moreover, there holds
		\begin{align}\label{SecConvAna:L2ConvOp}
			\|\RBSolutionDual{s}-u(s)\|_{0} \preceq e^{-C^*r}
			\begin{cases}
				\Norm{f}{0}, \quad &s>\frac{1}{2}, \\
				\Norm{f}{1}, \quad &s\leq\frac{1}{2}.
			\end{cases}
		\end{align}
		The constant $C^*$ only depends on the estimated condition number $\kappa$ and satisfies
		\begin{align}\label{SecAnalysis:AsymptoticC*}
			C^*(\kappa) = \mathcal{O}\left(\frac{1}{\ln\kappa}\right),\quad\text{as }\kappa\to\infty.
		\end{align}
		Its precise value can be specified by
		\begin{align*}
			C^* = \frac{\pi \mathcal{K}(\mu_1)}{4\mathcal{K}(\mu)},\qquad\quad \mu := \left(\frac{1-\sqrt{\delta}}{1+\sqrt{\delta}}\right)^2,\qquad\quad \mu_1 := \sqrt{1-\mu^2}.
		\end{align*}
	\end{Th}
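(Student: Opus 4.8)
The plan is to reduce the operator-level statement to a scalar approximation-theoretic estimate on the spectral interval, exactly in the spirit of \cite{RBMForFracDiff}. Recall from Lemma \ref{Lm:MinimDecomp} and Lemma \ref{Lm:EqualMinim} that the snapshots $\minim{N}(t_i)$ decompose over the discrete eigenbasis as $\sum_k (1+t_i^2\lambda_k^2)^{-1}\scp{f}{\varphi_k}{0}\varphi_k$. Hence, writing $c_k := \scp{f}{\varphi_k}{0}$ and using Theorem \ref{Th:DualvsNegative} together with the monotonicity property of interpolation norms under subspace restriction, the defect $\IntNormRBDual{f}{H}{-s}^2 - \Norm{f}{H^{-s}}^2$ is always nonnegative, which gives the left inequality in \eqref{SecAnalysis:NormError} for free. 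For the upper bound, the key observation is that any element of $\Vr$ is of the form $\sum_k p(\lambda_k^2)c_k\varphi_k$ where $p$ ranges over rational functions whose denominator is $\prod_{i=1}^r(1+t_i^2 x)$ and whose numerator has degree at most $r$; in particular, since $\minim{N}(t_0)=f\in\Vr$, the constant function lies in this class up to the denominator. The dual reduced basis norm, being an interpolation norm on a space containing $f$, is bounded above by the $\Hilbert{}^{1-s}$-norm of \emph{any} competitor, and choosing the competitor associated to a good rational approximant $r_\star(x)$ of $x^{1-s}$ on $[\lambda_L^{-2},\lambda_U^{-2}]$ (equivalently, of $x^{-s}$ after the substitution dictated by Theorem \ref{Th:DualvsNegative}) yields

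\begin{align*}
	\IntNormRBDual{f}{H}{-s}^2 - \Norm{f}{H^{-s}}^2 \preceq \Big(\sup_{x\in\sigma(\Op_H)}\big|x^{-s} - r_\star(x)\big|\Big)\,\Norm{f}{0}^2.
\end{align*}

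The core analytic step is then to estimate this sup-norm. One reduces the approximation of $x^{-s}$ on $[\lambda_L^2,\lambda_U^2]$ to the approximation of the family $x\mapsto (1+t^2x)^{-1}$ uniformly in $t$, or directly invokes the classical Zolotar\"ev result on rational approximation of $x^{1/2}$ (equivalently, of $\operatorname{sign}$) on a symmetric interval with endpoint ratio governed by $\delta = \lambda_L^2/\lambda_U^2$. Because the squared snapshots $t_i^2$ are precisely the transformed Zolotar\"ev points on $[\lambda_U^{-2},\lambda_L^{-2}]$, the product $\prod_i (1+t_i^2 x)$ interpolates the extremal rational function, and the equioscillation characterization gives the sharp decay $e^{-2C^*r}$ with $C^*$ the explicit elliptic-integral constant $\tfrac{\pi\mathcal K(\mu_1)}{4\mathcal K(\mu)}$; here the descending Landen transformation $\delta\mapsto\mu = \big((1-\sqrt\delta)/(1+\sqrt\delta)\big)^2$ enters because squaring the spectral variable halves the relevant modulus. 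Integrating this scalar bound against $t^{-2s-1}$ over $(0,\infty)$ and using $C_{1-s}^2 = \tfrac{2\sin(\pi(1-s))}{\pi}\leq 1$ absorbs the $s$-dependence into the generic constant, yielding \eqref{SecAnalysis:NormError}. The asymptotics \eqref{SecAnalysis:AsymptoticC*} follow from the standard behaviour $\mathcal K(\mu_1)/\mathcal K(\mu)\sim \tfrac{2}{\pi}\ln(4/\mu)$ as $\mu\to 0$ together with $\mu\asymp 4\sqrt\delta$ and $\delta = 1/\kappa$.

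For the solution bound \eqref{SecConvAna:L2ConvOp}, I would pass from the norm defect to the error $\RBSolutionDual{s} - u(s)$ using the identity $\Norm{\RBSolutionDual{s}-u(s)}{0}^2 = \Norm{f}{H^{-s}}^2 - \IntNormRBDual{\,\cdot\,}{}{}$-type telescoping that is available because $\IntOpRBDual{H}{-s}$ is the Galerkin-type projection associated with the reduced interpolation norm; concretely, $\Norm{\RBSolutionDual{s}-u(s)}{0}$ is controlled by $\big(\IntNormRBDual{f}{H}{-2s}^2 - \Norm{f}{H^{-2s}}^2\big)^{1/2}$ up to constants, and one reruns the scalar estimate with exponent $2s$ in place of $s$ — this is where the case distinction $s\lessgtr \tfrac12$ appears, since for $s\le\tfrac12$ the relevant power $x^{-2s}$ is only bounded after pairing with $\Norm{f}{1}$ rather than $\Norm{f}{0}$ (the extra factor $\lambda_k^2$ is harvested from $\scp{f}{\varphi_k}{1} = \lambda_k^2 c_k$). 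I expect the main obstacle to be the bookkeeping that identifies the reduced space $\Vr$ with the correct space of rational functions of $\lambda_k^2$ and verifies that the Zolotar\"ev snapshots make $\prod_i(1+t_i^2x)$ the extremal denominator — i.e., transferring the equioscillation optimality of Zolotar\"ev's solution through the two changes of variable ($x\mapsto x^{-1}$ from dualization and $x\mapsto x^2$ from the $\DeltaAmpl$-parametrization) without losing the sharp constant. Once that dictionary is in place, the remaining estimates are routine applications of the results quoted in Section \ref{SecInt} and standard elliptic-integral asymptotics.
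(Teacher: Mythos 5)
Your treatment of the norm estimate \eqref{SecAnalysis:NormError} is, in substance, the argument the paper delegates to \cite[Theorem 5.4]{RBMForFracDiff}: nonnegativity from the fact that the K-functional of the reduced couple is an infimum over the smaller set $\Vr$, and the upper bound from inserting a rational competitor built on the Zolotar\"ev snapshots, integrated against $t^{-2s-1}$. Be aware, though, that the scalar problem actually driving that result is the uniform-in-$t$ approximation of the minimizer $\minim{N}(t)$, i.e.\ of $x\mapsto(1+t^2x)^{-1}$ on $\sigma$, by the span of $(1+t_i^2x)^{-1}$; the function $x^{-s}$ only appears after the $t$-integration, so your direct appeal to a best rational approximant $r_\star$ of $x^{-s}$ skips the step that makes the bound uniform in $s$ with the single constant $C^*$. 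That part is recoverable. The genuine gap is in your argument for \eqref{SecConvAna:L2ConvOp}. The operator $\IntOpRBDual{H}{-s}$ is not an orthogonal (Galerkin) projection of $u(s)$ in $\scp{\cdot}{\cdot}{0}$ — its matrix representation $A^{-1}MV_r\Ainvr{s-1}V_r^TM$ even shows $\RBSolutionDual{s}\notin\Vr$ in general — so there is no telescoping identity tying $\Norm{\RBSolutionDual{s}-u(s)}{0}$ to the norm defect at the doubled exponent $2s$. Concretely, take $r=0$, $\Vr=\Span\{f\}$ with $f=\varphi_1+\varphi_2$, and $s=\tfrac12$: then $\IntNormRBDual{f}{H}{-1}^2=\Norm{f}{-1}^2=\Norm{f}{H^{-1}}^2$, so your controlling quantity vanishes, while $\RBSolutionDual{\tfrac12}=\mu\,\RieszVh F$ with $\mu^2=\Norm{f}{0}^2/\Norm{f}{-1}^2$ differs from $u(\tfrac12)=\sum_k\lambda_k^{-1}\scp{f}{\varphi_k}{0}\varphi_k$ by an amount of order one. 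Moreover, for $s>\tfrac12$ the exponent $2s$ leaves the interpolation range $(0,1)$, so the defect estimate you want to ``rerun'' is not even defined there.

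The paper's route is different and you would need some version of it: by Theorem \ref{Th:DualvsNegative} it suffices to treat the K-operators; then Theorem \ref{Th:IntRepr} and Cauchy--Schwarz give, for every $w\in\Vh$, a bound of the duality pairing error by $\Norm{w}{-1}\int_0^\infty t^{-2s-1}\Norm{\minim{r}(t)-\minim{N}(t)}{-1}\,dt$, where $\minim{r}(t)$ is the reduced minimizer; the integrand is controlled by the K-functional defect $K_r^2(t;f)-K^2(t;f)$ (this is \cite[Corollary 5.9]{RBMForFracDiff}, a genuinely separate pointwise-in-$t$ estimate that your doubling device does not supply), and the integrated Zolotar\"ev bound \cite[Theorem 5.24]{RBMForFracDiff} then yields $e^{-C^*r}\Norm{w}{-1}\Norm{f}{1}$. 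The case distinction at $s=\tfrac12$ arises there, from whether $\Norm{f}{1}$ may be replaced by $\Norm{f}{0}$ when $1-s<\tfrac12$ (integrability of the weight near the endpoints of the $t$-integral), not from pairing $x^{-2s}$ with $\Norm{f}{1}$ as you suggest. Finally one recovers the $\Norm{\cdot}{0}$-estimate by the duality $\sup_{w\in\Vh\setminus\{0\}}|\scp{w}{\cdot}{0}|/\Norm{w}{0}$ together with $\Norm{w}{-1}\preceq\Norm{w}{0}$. Your sketch of the asymptotics \eqref{SecAnalysis:AsymptoticC*} via the standard elliptic-integral expansion is fine.
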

	\begin{proof}
		The first part of the theorem, namely \eqref{SecAnalysis:NormError}, is a direct consequence of \cite[Theorem 5.4]{RBMForFracDiff} and the fact that solutions to \eqref{SecRBM:EulerLagrangeLSE} indeed coincide with minimizers of the dual K-functional up to identification, as shown in Lemma \ref{Lm:MinimDecomp} and \ref{Lm:EqualMinim}. 
		
		To confirm \eqref{SecConvAna:L2ConvOp}, it suffices to prove convergence with respect to $\IntOpRBDual{K}{-s}$, the rest follows from Theorem \ref{Th:DualvsNegative}. For the ease of use we define the operators
		\begin{align*}
			\Op_{K}^{-s} := \RieszVh\Op_{K_*^{1-s}} := \RieszVh\IntOpGeneral{K}{s}{\Vh'}{\Norm{\cdot}{-1}}{\Vh}{\Norm{\cdot}{0}}.
		\end{align*}
		Let $K_r^2(t;f)$ and $K^2(t;f)$ denote the K-functionals of $((\Vr',\Norm{\cdot}{-1}),(\Vr,\Norm{\cdot}{0}))$ and $((\Vh',\Norm{\cdot}{-1}),(\Vh,\Norm{\cdot}{0}))$, respectively, and $\minim{r}(t)\in\Vr$ the minimizer of $K_r^2(t;f)$. Due to Theorem \ref{Th:IntRepr} and Cauchy-Schwarz inequality, we observe
		\begin{align*}
			\left|\scp{w}{\Op_{K_{*,r}^{1-s}}(f)}{-1}-\scp{w}{\Op_{K_*^{1-s}}f}{-1}\right|&\leq \Norm{w}{-1}\int_0^\infty t^{-2s-1}\Norm{\minim{r}(t) - \minim{N}(t)}{-1}\,dt \\
			&\leq \Norm{w}{-1}\int_0^\infty t^{-2s-1}\left(K_r^2(t;f) - K^2(t;f)\right)dt,
		\end{align*}
		where the last inequality follows from \cite[Corollary 5.9]{RBMForFracDiff}. Theorem 5.24 in \cite{RBMForFracDiff} reveals for all $w\in\Vh$
		\begin{align*}
			\left|\scp{w}{\Op_{K_{*,r}^{1-s}}(f)}{-1}-\scp{w}{\Op_{K_*^{1-s}}f}{-1}\right| \preceq e^{-C^*r}\|w\|_{-1} \Norm{f}{1} \preceq e^{-C^*r}\|w\|_{0}\|f\|_{1},
		\end{align*}
		where $\|f\|_{1}$ can be replaced with $\Norm{f}{0}$ as $1-s<\frac{1}{2}$, i.e., as $s>\frac{1}{2}$. This yields
		\begin{align*}
			\|\IntOpRBDual{K}{-s}(f) - \Op_{K}^{-s}f\|_{0} &= \sup\limits_{w\in \Vh\setminus\{0\}}\frac{|\scp{w}{\IntOpRBDual{K}{-s}(f) - \Op_{K}^{-s}f}{0}|}{\|w\|_{0}} \\
			&= \sup\limits_{w\in \Vh\setminus\{0\}}\frac{\left|\scp{w}{\RieszVh\IntOpRBDual{K^{1-s}}{}(f) - \RieszVh\Op_{K_*^{1-s}}f}{0}\right|}{\|w\|_{0}} \\
			&= \sup\limits_{w\in \Vh\setminus\{0\}}\frac{|\scp{w}{\IntOpRBDual{K^{1-s}}{}(f)}{-1} - \scp{w}{\Op_{K_*^{1-s}}f}{-1}|}{\|w\|_{0}} \\
			&\preceq e^{-C^*r}
			\begin{cases}
				\Norm{f}{0}, \quad s>\frac{1}{2},\\
				\Norm{f}{1}, \quad s\leq\frac{1}{2},			
			\end{cases}
		\end{align*}
		concluding the proof.
 	\end{proof}
 	\begin{Rem}
 		Even though \eqref{SecAnalysis:AsymptoticC*} stems from an asymptotic identity, it is observed experimentally that this characterization appears to be rather accurate already for small values of $\kappa$. In practice, one is usually interested in the number of solves required to guarantee a prescribed precision $\varepsilon>0$. This can be achieved by means of equation \eqref{SecAnalysis:AsymptoticC*}, such that
 		\begin{align*}
 		r = \mathcal{O}(\ln\varepsilon\ln(\kappa^{-1})).
 		\end{align*}
 	\end{Rem}

	\section{Numerical examples}\label{SecNumericalEx}
	In this section, we conduct an empirical comparison of the reduced basis norms and operators to affirm their predicted convergence rates from Theorem \ref{Th:Core}. For concreteness, we set $\Vnull := (L_2(\Omega),\Norm{\cdot}{L_2})$ and $\Veins := (H_0^1(\Omega),\Norm{\nabla\cdot}{L_2})$, $\Omega\subset\R^2$, in all our experiments to study the model problem \eqref{SecInt:ModelProblem}. We further choose $\Vh\subset H_0^1(\Omega)$ to be a finite element space of polynomial order $p$ on a quasi-uniform, triangular mesh $\mathcal{T}_h$ of mesh size $h$. All numerical examples were implemented within the finite element library Netgen/NGSolve\footnote{www.ngsolve.org}, see \cite{Netgen} and \cite{NGSolve}. Implementation of the Zolotar\"ev points is performed by means of the special function library from \texttt{Scipy}\footnote{https://docs.scipy.org/doc/scipy/reference/special.html}.
	\begin{Ex}\label{Ex:1}
		We are concerned with the approximation of the dual interpolation norms arising from $\Vnull$ and $\Veins$ on the unit square $\Omega = (0,1)^2$. To make matters precise, we set $p = 2$ and $h = 0.01$. On $\Omega$, the smallest eigenvalue of $(\Vnull,\Veins)$ is known explicitly, namely $2\pi^2$, and thus can be utilized as lower bound for $\sigma(\Op_H)$. We set $\lambda_L^2 = 2\pi^2$ and $\lambda_U^2 = 1721511$, where the latter stems from an approximation obtained by power iteration. For the ease of use, we define
		\begin{align*}
			\textrm{e}_*(r,s,h) := \IntNormRBDual{f}{H}{-s} - \Norm{f}{H^{-s}},\qquad \textrm{e}(r,s,h) := \left|\IntNormRB{f}{H}{-s} - \Norm{f}{H^{-s}}\right|.
		\end{align*}
		Note that $\textrm{e}_*(r,s,h) \geq 0$. The error of the norm approximations is illustrated in Figure \ref{Fig:ConvergenceNorms}, where $f$ is chosen to be the $L_2$-orthonormal projection of the constant $1-$function onto $\Vh$. Exponential convergence is observed in all cases. The speed at which the error decreases is faster than the predicted worst-case scenario from Theorem \ref{Th:Core}, where $C^*\approx 0.39$.
		\begin{figure}[!h]
			\includegraphics[width=0.58\textwidth]{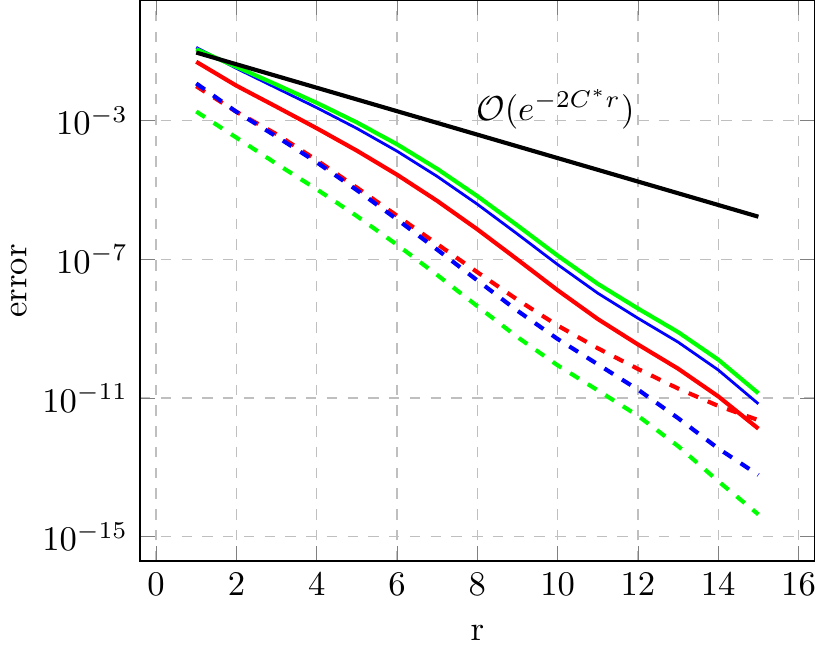}
			\caption{Error $\textrm{e}_*(r,s,0.01)$ (dashed) and $\textrm{e}(r,s,0.01)$ (solid) for $s = 0.1$ (red), $s = 0.5$ (blue), and $s = 0.9$ (green).}\label{Fig:ConvergenceNorms}
		\end{figure}
	\end{Ex}
	\begin{Ex}
		We set $p = 1$ and $h = 0.04$ to study numerically the accuracy of the reduced basis surrogates on the L-shape domain $\Omega := (0,1)^2\setminus([0,0.5]\times[0.5,1])$. Based on numerical approximations of the extremal eigenvalues, we choose $\lambda_L^2 = 18$ and $\lambda_U^2 =  18083$. Moreover, we introduce 
		\begin{align*}
			{E}_*(r,s,h) := \Norm{\RBSolutionDual{s} - u(s)}{L_2},\qquad E(r,s,h) := \Norm{u_r(s) - u(s)}{L_2},
		\end{align*}
		to report the discrepancy between the truth solution $u(s)\in\Vh$ and its reduced basis approximations for various choices of $s$ and a randomly chosen $f\in\Vh$ in Figure \ref{Fig:ConvergenceOperators}. As predicted by Theorem \ref{Th:Core}, exponential convergence rates of order $C^* \approx 0.63$ are observed for the dual reduced basis approximations, irrespectively of the fractional order, incorporating the very same reduced space. Moreover, the example indicates that $u_r(s)$ enjoys to the same convergence properties as its predecessor. The extrapolation-based reduced basis approximation outperforms $u_r^*(s)$ as $s\leq \frac{1}{2}$ and appears to be less prone to small values of $s$.	
		\begin{figure}[ht]
			\begin{minipage}[t]{0.485\linewidth}
				\centering							
				\includegraphics[width=\textwidth]{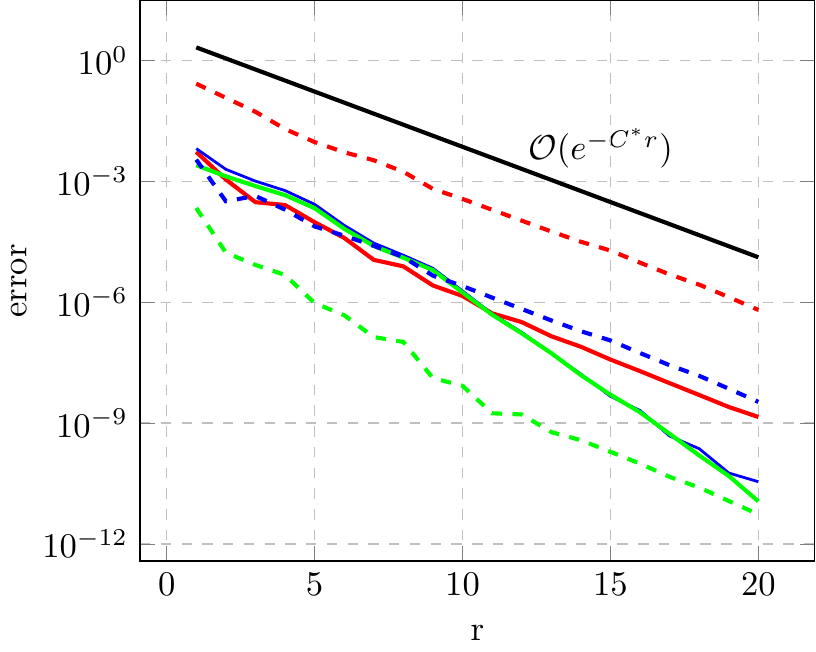}
				\captionsetup{width = \linewidth}
				\caption{Error $E_*(r,s,0.04)$ (dashed) and $E(r,s,0.04)$ (solid) for $s = 0.1$ (red), $s = 0.5$ (blue), and $s = 0.9$ (green).}\label{Fig:ConvergenceOperators}
			\end{minipage}
			\hspace{0.1cm}
			\begin{minipage}[t]{0.485\linewidth}
				\centering
				\includegraphics[width=\textwidth]{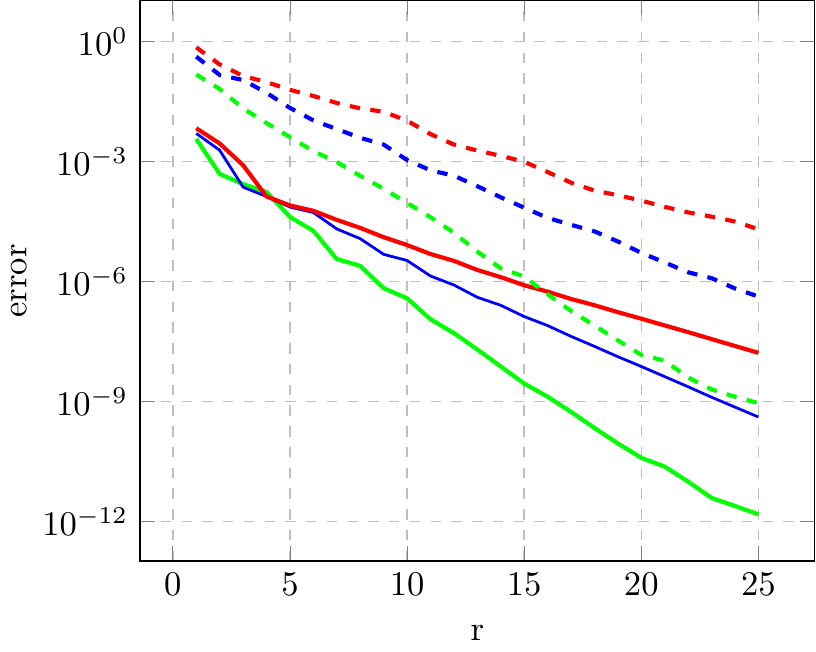}
				\captionsetup{width = \linewidth}
				\caption{Error $E_*(r,0.1,h)$ (dashed) and $E(r,0.1,h)$ (solid) for $h = 1.3\cdot 2^{-k}$, $k=4,6,8$, in green, blue, and red, respectively.}\label{Fig:Connditon}
			\end{minipage}
		\end{figure}
	
		The rate of convergence deteriorates as the problem becomes ill-conditioned, i.e., as $h\to 0$. Figure \ref{Fig:Connditon} mirrors the impact of decreasing mesh size on the performance of our algorithms for a randomly chosen right-hand side $f\in\Vh$ and $s = 0.1$. In accordance with the theory, a smooth transition between $\mathcal{O}(e^{-0.82r})$ and $\mathcal{O}(e^{-0.39r})$ is observed. The logarithmical dependency on the estimated condition number seemingly also applies to $u_r(s)$. As in the previous example, $u_r(s)$ is superior to its dual counterpart despite the reduced computational effort.
	\end{Ex}

	\section{Appendix}
	\begin{proof}[Proof of Theorem \ref{Th:IntRepr}]
		It suffices to show that
		\begin{align}\label{SecIntro:KFunc-Repr}
			\K_{(\Vnull,\Veins)}^2(t;u) = \Norm{u}{0}^2 - \scp{u}{\minim{}(t)}{0}.
		\end{align}
		There holds
		\begin{align*}
			\Norm{u-\minim{}(t)}{0}^2 = \Norm{u}{0}^2 - 2\scp{u}{\minim{}(t)}{0} + \Norm{\minim{}(t)}{0}^2. 
		\end{align*}
		Let $u_k := \scp{u}{\varphi_k}{0}$ to deduce from Lemma \ref{Lm:MinimDecomp}
		\begin{align*}
			t^2\Norm{\minim{}(t)}{1}^2 = \Sum{k=1}{\infty}\frac{t^2\lambda_k^2u_k^2}{(1+t^2\lambda_k^2)^2} = \Sum{k=1}{\infty}\frac{u_k^2}{1+t^2\lambda_k^2} - \frac{u_k^2}{(1+t^2\lambda_k^2)^2} = \scp{u}{\minim{}(t)}{0} - \Norm{\minim{}(t)}{0}^2, 
		\end{align*}
		which proves \eqref{SecIntro:KFunc-Repr} and concludes the proof.
	\end{proof}
	\begin{proof}[Proof of Theorem \ref{Thm:DualEigenpairs}]
		One observes that for any $F\in\Vnull$ 
			\begin{align*}
				\scp{F}{\varphi_k}{} = \scp{\Riesz F}{\varphi_k}{1}
				&= \lambda_k^2\scp{\Riesz F}{\varphi_k}{0}	= \lambda_k^{2}\scp{\Phi_k}{\Riesz F}{} = \lambda_k^{2}\scp{\Phi_k}{F}{-1},
			\end{align*}
		from which we conclude that $(\lambda_k\Phi_k)_{k=1}^\infty$ is a $\Hilbert{-1}$-orthonormal system of eigenfunctions. Since
		\begin{align*}
			0 = \scp{F}{\Phi_k}{-1} = \lambda_k^{-2}\scp{F}{\varphi_k}{}
		\end{align*}
		for all $k\in\N$ implies that $F = 0$, it is also a basis. This proves the claim.
	\end{proof}

	\begin{proof}[Proof of Theorem \ref{Th:DualvsNegative}]
		Due to
		\begin{align*}
			\scp{F}{\Phi_k}{-1} = \lambda_k^{-2}\scp{F}{\varphi_k}{}
		\end{align*}
		and Theorem \ref{Thm:DualEigenpairs}, there holds
		\begin{align*}
			\IntNormDual{F}{H}{1-s}^2 = \Sum{k=1}{\infty}\lambda_k^{2-2s}\scp{F}{\Psi_k}{-1}^2 =  \Sum{k=1}{\infty}\lambda_k^{-2s}\scp{F}{\varphi_k}{}^2
			= \IntNorm{F}{H}{-s}^2,
		\end{align*}
		proving the first equality in \eqref{SecIntro:NormEqualityNew}. The second one follows by means of \eqref{SecIntro:NormEquality}. Furthermore, one observes
		\begin{align*}
			\Riesz\IntOpDual{H}{1-s}F &= \Riesz\Sum{k=1}{\infty}\lambda_k^{2-2s}\scp{F}{\Psi_k}{-1}\Psi_k = \Riesz\Sum{k=1}{\infty}\lambda_k^{-2s}\scp{F}{\psi_k}{}\Psi_k \\ 
			&= \Sum{k=1}{\infty}\lambda_k^{2-2s}\scp{F}{\varphi_k}{}\Riesz\Phi_k = \Sum{k=1}{\infty}\lambda_k^{-2s}\scp{F}{\varphi_k}{}\varphi_k = \IntOp{H}{-s}F,
		\end{align*}
		confirming the first equality in \eqref{SecIntro:EqualOperators}. The latter is a consequence of \eqref{SecIntro:IntOpEquality}. The remainder follows as $\IntOp{H}{-s}F = \IntOp{H}{s}^{-1}f$ for $F\in\Vnull$.
	\end{proof}
	
	\section*{Acknowledgements}
	The authors acknowledge support from the Austrian Science Fund (FWF) through grant number F 65 and W1245.

	\bibliography{RBM2Bibliography}{}
	\bibliographystyle{amsplain}
	
\end{document}